\newcommand{\myLines}[1]{
\begin{picture}(4,1)
\put(0,0){\line(2,1){2}}
\put(2,0){\vector(0,1){0.7}}
\put(4,0){\line(-2,1){2}}
\end{picture}}
\let\qdrawReal=\qdraw@branches
\newcommand\brOverride{\let\qdraw@branches=\myLines}
\newcommand\brRestore{\let\qdraw@branches=\qdrawReal}
\newcommand{\JL}{{\sf JL}}
\newcommand{\LP}{{\sf LP}}
\newcommand{\M}{{\mathcal M}}
\newcommand{\E}{{\mathcal E}}
\newcommand{\V}{{\mathcal V}}
\newcommand{\CS}{\mathcal{CS}}
 \def\r{\rightarrow}
\def\di{\displaystyle}
\begin{document}
\title{Tableau Proof Systems for Justification Logics}

\author{Meghdad Ghari}
\institute{School of Mathematics,
Institute for Research in Fundamental Sciences (IPM), \\ P.O.Box: 19395-5746, Tehran, Iran \\ \email{ghari@ipm.ir}
}

\maketitle
\begin{abstract}
In this paper we present tableau proof systems for various justification logics. We show that the tableau systems are sound and complete with respect to Mkrtychev models. In order to prove the completeness of the tableaux, we give a syntactic proof of cut elimination. We also show the subformula property for our tableaux. \\

{\bf Keywords}: Justification logics, Tableaux, Subformula property,   Analytic cut, Cut elimination
\end{abstract}

\section{Introduction}

Justification logics are modal-like logics that provide a framework for reasoning about epistemic justifications (see \cite{A2008,ArtemovFitting,Fitting2008} for a survey). The language of justification logics extends the language of propositional logic by justification terms and expressions of the form $t:A$, with the intended meaning ``$t$ is a justification for $A$''. Justification terms are constructed from variables and constants by means of various operations. The first logic in the family of justification logics, \emph{the Logic of Proofs} \LP, was introduced by  Artemov in \cite{A1995,A2001}. The logic of proofs is a counterpart of modal logic {\sf S4}. Other logics of this kind have been introduced so far (cf. \cite{KuznetsStuder2012}).  In this paper we deal only with those justification logics which are counterparts of normal modal logics between {\sf K} and {\sf S5}.

Various tableau proof systems have been developed for the logic of proofs (see
\cite{Finger2010,Fitting2005,Renne2004,Renne2006}). However, it seems the only
analytic tableau proof system is Finger's KE tableaux for the logic of proofs \cite{Finger2010}. Finger's tableau system has KE tableau rules (cf. \cite{D’AgostinoMondadori1994}) in its propositional part. KE tableaux have linear tableau rules for propositional connectives, and the cut rule $(PB)$. 

 Most of the justification logics still lack tableau proof
systems. The aim of this paper is to present tableau proof systems for various justification logics. For each justification logic we present two tableau proof systems.  All tableau proof systems are sound and complete with respect to Mkrtychev models of justification logics.

In the first formulation (see Section \ref{sec:Tableaux 1}), the rules of the tableau system for {\sf J} is similar to the ({\sf J}-part) tableau rules given by Renne in \cite{Renne2006} for \LP. Renne's tableaux corresponds to the Artemov's sequent calculus for $\LP$ in \cite{A2001}. The subformula property fails for both the tableaux and the sequent calculus of \LP, and also fails for the tableaux of justification logics introduced in this section.

In the second formulation (see Section \ref{sec:JL^T tableaux}), we present a tableau system for {\sf JL},  which is similar to its KE tableau system but with ordinary propositional rules.
 Our propositional tableau rules are the ordinary ones given by Smullyan \cite{Smullyan1968}, and justification tableau rules are  similar to those introduced by Finger \cite{Finger2010}. In order to prove the completeness of these tableaux, we give a syntactic proof of cut elimination. Following Finger \cite{Finger2010}, by restricting the applications of $(PB)$ to analytic ones, we obtain analytic tableaux for justification logics. We give a definition of subformulas in the context of justification logics, and prove that our tableau systems enjoy the subformula property.   
 
\section{Justification logics}\label{sec:Justification logics}
The language of justification logics is an
extension of the language of propositional logic by the formulas
of the form $t:F$, where $F$ is a formula and $t$ is a
justification term. \textit{Justification terms} (or
\textit{terms} for short) are built up from (justification)
variables $x, y, z, \ldots$ and (justification) constants $a,b,c,\ldots$  using several operations depending on the logic: (binary) application `$\cdot$', (binary) sum `$+$', (unary) verifier `$!$', (unary) negative verifier `$?$', and (unary) weak negative verifier `$\bar{?}$'. Subterms of a term are defined in the usual way: $s$ is a subterm of $s,
s+t, t+s, s\cdot t$, $!s$, $\bar{?}s$, and  $?s$.

Justification formulas are constructed from a countable set of propositional variables, denoted $\mathcal{P}$, by the following grammar:
\[ A::= p~|~\bot~|~\neg A~|~A\rightarrow A~|~t:A,\]

where $p\in\mathcal{P}$ and $t$ is a justification term. Other Boolean connectives are defined as usual.

We now begin with describing the axiom schemes and rules of the basic
justification logic {\sf J}, and continue with other justification
logics. The basic justification logic {\sf J} is the weakest
justification logic we shall be discussing. Other
justification logics are obtained by adding certain axiom schemes
to {\sf J}.
\begin{definition}\label{def: justification logics}
Axioms schemes of {\sf J} are:
\begin{description}
\item[Taut.] All propositional tautologies,
\item[Sum.] $s:A\rightarrow (s+t):A~,~s:A\rightarrow (t+s):A$,
\item[jK.] $s:(A\rightarrow B)\rightarrow(t:A\rightarrow (s\cdot t):B)$.
\end{description}
Other justification logics are obtained by adding the following axiom schemes to {\sf J} in various combinations:
\begin{description}
\item[jT.] $t:A\rightarrow A$.
\item[jD.] $t:\perp \rightarrow \perp$.
\item[j4.] $t:A\rightarrow !t:t:A$,
\item[jB.] $\neg A\rightarrow\bar{?} t:\neg t: A$.
\item[j5.] $\neg t:A\rightarrow ?t:\neg t:A$.
\end{description}
All justification logics have the inference rule Modus Ponens, and the \textit{Iterated Axiom Necessitation} rule:
\begin{description}
\item[IAN.]
$\vdash c_{i_n}:c_{i_{n-1}}:\ldots:c_{i_1}:A$, where $A$ is an axiom instance of the logic, $c_{i_j}$'s
are arbitrary justification constants and $n\geq 1$.
\end{description}
\end{definition}
In what follows, {\sf JL} denotes any of the justification logics defined in Definition \ref{def: justification logics}, unless stated otherwise. The language of each justification logic $\JL$ includes those operations on terms that are present in its axioms. $Tm_\JL$ and $Fm_\JL$ denote the set of all terms and the set of all formulas of $\JL$ respectively. Moreover, the name of each justification logic is indicated by the list of its axioms. For example, ${\sf JT4}$ is the extension of ${\sf J}$ by axioms jT and j4, in the language containing term operations $\cdot$, $+$, and $!$. {\sf JT4} is usually called the logic of proofs $\LP$.

\begin{definition}
A \textit{constant specification} $\CS$
for \JL~is a set of formulas of the form
$c_{i_n}:c_{i_{n-1}}:\ldots:c_{i_1}:A$, where $n\geq 1$, $c_{i_j}$'s are
justification constants and $A$ is an axiom instance of \JL, such that it is downward closed: if $c_{i_n}:c_{i_{n-1}}:\ldots:c_{i_1}:A\in\CS$, then $c_{i_{n-1}}:\ldots:c_{i_1}:A\in\CS$.
\end{definition}

The typical form of a formula in a constant specification for \JL~is $c:F$, where $c$ is a justification constant, and $F$ is either an axiom instance of \JL~or of the form $c_{i_m}:c_{i_{m-1}}:\ldots:c_{i_1}:A$, where $m\geq 1$, $c_{i_j}$'s are justification constants and $A$ is an axiom instance of \JL.

Let ${\sf JL}_\CS$ be the fragment of ${\sf JL}$ where the Iterated Axiom Necessitation rule only produces formulas from the given $\CS$.

In the remaining of this section, we recall the definitions of M-models for justification logics (see \cite{Mkrtychev1997, KuznetsStuder2012}).

\begin{definition}\label{Kripke-Fitting models J}
 An M-model $\M=(\E, \V)$ for justification
logic ${\sf J}_\CS$ consists of a valuation $\V:\mathcal{P} \rightarrow \{0,1\}$ and an admissible evidence function $\E:Tm_\JL \rightarrow 2^{Fm_\JL}$ meeting the following conditions:
\begin{description}
 \item[$\E 1.$]  $A\r B\in\E(s)$ and $A\in\E(t)$ implies $B\in\E(s\cdot t)$.
  \item[$\E 2.$]  $\E(s)\cup \E(t)\subseteq\E(s+t)$.
  \item[$\E 3.$]  $c:F\in\CS$ implies $F\in\E(c)$.
  \end{description}
  \end{definition}

\begin{definition}\label{def:forcing relation}
For an M-model $\M=(\E, \V)$ the forcing relation $\Vdash$ is defined as follows:
\begin{enumerate}
\item $\M\not\Vdash \bot$,
\item $\M\Vdash p$ if{f} $\V(p)=1$, for $p\in\mathcal{P}$,
 \item $\M\Vdash \neg A$ if{f} $\M\not\Vdash A$,
  \item $\M\Vdash A\r B$ if{f} $\M\not\Vdash A$ or $\M\Vdash B$,
 \item If $\JL$ does not contain axiom jT: $\M\Vdash t:A$ if{f} $A\in\E(t)$.

 If $\JL$ contains axiom jT: $\M\Vdash t:A$ if{f} $A\in\E(t)$ and $\M \Vdash A$.
\end{enumerate}
If $\M\Vdash F$ then it is said that $F$ is true in $\M$ or $\M$ satisfies $F$.
\end{definition}

In order to define M-models for other justification logics of Definition \ref{def: justification logics} certain
additional conditions should be imposed on the M-model.

\begin{definition}\label{Kripke-Fitting models JL}
 An M-model $\M=(\E, \V)$ for justification logic ${\sf JL}_\CS$ is an M-model for ${\sf J}_\CS$ such that:
\begin{itemize}
\item if $\JL$ contains axiom jD, then for all $t\in Tm_\JL$:
\begin{description}
   \item[$\E 4.$]  $\bot\not\in\E(t)$.\vspace{0.05cm}
     \end{description}
\item if $\JL$ contains axiom j4, then for all $t\in Tm_\JL$ and $A\in Fm_\JL$:
\begin{description}
   \item[$\E 5.$]  $A\in\E(t)$ implies $t:A\in\E(!t)$.\vspace{0.05cm}
     \end{description}
\item if $\JL$ contains axiom jB, then for all $t\in Tm_\JL$ and $A\in Fm_\JL$:
\begin{description}
     \item[$\E 6.$]  $\M\not\Vdash A$ implies $\neg t:A\in\E(\bar{?}t)$.\vspace{0.05cm}
    \end{description}
\item if $\JL$ contains axiom j5, then for all $t\in Tm_\JL$ and $A\in Fm_\JL$:
\begin{description}
       \item [$\E 7.$] $A\not\in\E(t)$ implies $\neg t:A\in\E(?t)$.
         \end{description}
\end{itemize}
\end{definition}

By a $\JL_\CS$-model we mean an M-model for justification logic $\JL_\CS$. A \JL-formula $F$ is $\JL_\CS$-valid if it is true in every $\JL_\CS$-model. For a set $S$ of formulas, $\M\Vdash S$ provided that $\M\Vdash F$ for all formulas $F$ in $S$. Note that given a constant specification $\CS$ for \JL, and a model $\M$ of $\JL_\CS$ we have $\M\Vdash \CS$ (in this case it is said that $\M$ respects $\CS$).

The proof of soundness and completeness theorems for all justification logics of Definition \ref{def: justification logics} are given in \cite{KuznetsPhD2008,KuznetsStuder2012}.

\begin{theorem}\label{thm:Sound Compl JL}
Let \JL~be one of the justification logics of Definition \ref{def: justification logics}, and $\CS$ be a constant specification for \JL. Then a \JL-formula $F$ is provable in  $\JL_\CS$ if{f} $F$ is $\JL_\CS$-valid.
\end{theorem}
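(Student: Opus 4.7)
The plan is to prove the two directions separately: soundness by induction on the length of a derivation in $\JL_\CS$, and completeness by a Henkin-style construction of a canonical Mkrtychev model extracted from a maximal consistent set containing the negation of the given formula.

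For soundness I would fix an arbitrary $\JL_\CS$-model $\M$ and check that every $\JL_\CS$-theorem is forced. Propositional tautologies and Modus Ponens are handled by the standard arguments from forcing clauses~1--4 of Definition~\ref{def:forcing relation}. Each of the schematic justification axioms corresponds almost one-to-one with a clause on $\E$: Sum with $\E 2$, jK with $\E 1$, jT directly with the modified forcing clause for $t{:}A$, jD with $\E 4$, j4 with $\E 5$, jB with $\E 6$, and j5 with $\E 7$; in every case a short computation closes the case. For IAN I would use that in $\JL_\CS$ the rule only produces formulas that lie in $\CS$, and every $\JL_\CS$-model respects $\CS$ by Definition~\ref{Kripke-Fitting models JL}.

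For completeness I would contrapose. Suppose $\not\vdash_{\JL_\CS} F$. Then $\{\neg F\}$ is $\JL_\CS$-consistent, so a routine Lindenbaum construction yields a maximal $\JL_\CS$-consistent set $\Gamma \supseteq \{\neg F\}$. I would construct the canonical single-world model $\M_\Gamma = (\E_\Gamma,\V_\Gamma)$ by setting
\[
\V_\Gamma(p)=1 \iff p\in\Gamma, \qquad \E_\Gamma(t)=\{\, A\in Fm_\JL \mid t{:}A\in\Gamma\,\}.
\]
Using the axioms of $\JL$ and the deductive closure of $\Gamma$, one verifies the closure properties: $\E 1$ from jK, $\E 2$ from Sum, $\E 3$ from IAN together with the downward closure of $\CS$, $\E 4$ from jD, and $\E 5$ from j4. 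One then proves by induction on the construction of $A$ the truth lemma $\M_\Gamma\Vdash A \iff A\in\Gamma$. The only nontrivial case is $A=t{:}B$: without jT the definitions of $\E_\Gamma$ and of $\Vdash$ match, while with jT the extra conjunct $\M_\Gamma\Vdash B$ reduces by the induction hypothesis to $B\in\Gamma$, which is delivered by jT and Modus Ponens. Since $\neg F\in\Gamma$, the truth lemma then gives $\M_\Gamma\not\Vdash F$.

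The main obstacle is $\E 6$ (and symmetrically $\E 7$), whose very statement already refers to $\Vdash$ rather than to a purely syntactic condition on $\Gamma$. The cleanest route is to establish the truth lemma first and then deduce $\E 6$: from $\M_\Gamma\not\Vdash A$ the truth lemma gives $A\notin\Gamma$, hence $\neg A\in\Gamma$ by maximality, and then jB together with Modus Ponens yields $\bar{?}t{:}\neg t{:}A\in\Gamma$, i.e.\ $\neg t{:}A\in\E_\Gamma(\bar{?}t)$; $\E 7$ is analogous via j5. Because the forcing clause for $t{:}A$ never appeals to $\E 6$ or $\E 7$, no circularity arises, and $\M_\Gamma$ is a genuine $\JL_\CS$-model falsifying $F$, as required.
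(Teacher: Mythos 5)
Your route is the standard one from the literature; note that the paper itself does not prove this theorem but delegates it to the cited sources (Kuznets's thesis and Kuznets--Studer), and your Lindenbaum/canonical-model completeness argument, including the idea of verifying $\E 6$ and $\E 7$ only after the Truth Lemma to avoid circularity, is exactly how those proofs are organized and is correct as written. The genuine gap is in the soundness half, in the case of axiom j5 for logics that also contain jT (${\sf JT5}$, ${\sf JT45}$ -- the counterparts of {\sf S5} -- are within the paper's scope). With the paper's forcing clause for jT-logics, $\M\Vdash t:A$ if{f} $A\in\E(t)$ \emph{and} $\M\Vdash A$, the assumption $\M\Vdash\neg t:A$ does not give $A\notin\E(t)$: it can also happen that $A\in\E(t)$ while $\M\not\Vdash A$, and then $\E 7$ yields nothing, so $\M\Vdash {?}t:\neg t:A$ is not forthcoming. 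Thus ``j5 with $\E 7$; a short computation closes the case'' is precisely the step that fails.

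Moreover, this cannot be repaired inside the paper's literal definitions, because the implication is false there: consider the ${\sf JT5}$-model with $\V(p)=0$, with $\E(?x)$ consisting of all formulas except $\neg x:p$, and with $\E(s)$ the set of all formulas for every other term $s$. Conditions $\E 1$--$\E 3$ hold because no application term, sum term, or constant equals $?x$, and $\E 7$ holds because its hypothesis is instantiated only by $t=?x$ and $A=\neg x:p$, in which case $\neg ?x:\neg x:p\in\E(??x)$. Yet $\M\Vdash\neg x:p$ (since $\M\not\Vdash p$) while $\M\not\Vdash {?}x:\neg x:p$, so this instance of j5 is falsified (the same model also works for ${\sf JT45}$). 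The sources the paper cites avoid this by keeping the uniform clause $\M\Vdash t:A$ if{f} $A\in\E(t)$ for all logics and imposing on models of jT-logics the factivity condition that $A\in\E(t)$ implies $\M\Vdash A$; under that formulation your j5 computation does go through (now $\M\Vdash\neg t:A$ forces $A\notin\E(t)$), and in your canonical model factivity is verified from jT and Modus Ponens after the Truth Lemma, in exactly the style you already use for $\E 6$ and $\E 7$. You should switch to that formulation (or otherwise strengthen the model conditions for the logics containing both jT and j5); with that amendment the rest of your proposal stands.
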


\section{Tableaux}

In this section we present two different tableau proof systems for each justification logic of Definition \ref{def: justification logics}.  The rules of our first tableau system for {\sf J} in Section \ref{sec:Tableaux 1} is similar to that given in \cite{Renne2006}. In Section \ref{sec:JL^T tableaux} we present a tableau system for {\sf J} which is similar to the KE tableau system of $\LP$ in \cite{Finger2010}, but with ordinary propositional rules instead of linear propositional KE rules. 

\subsection{$\JL$-Tableaux}\label{sec:Tableaux 1}
Tableau proof systems for the logic of proofs are given in \cite{Fitting2005, Renne2004,Renne2006}. In this section we present similar tableaux for all justification logics.

A ${\sf J}_\CS$-tableau for a formula is a binary tree with the negation of  that formula at the root constructed by applying ${\sf J}_\CS$-tableau rules from Table \ref{table:tableau rules J}. For extensions of {\sf J}, tableau rules corresponding to axioms from Table \ref{table:tableau rules JL} should be added to ${\sf J}_\CS$-tableau rules. For example, the tableau proof system of the logic of proofs $\LP$ is obtained by adding the rules $(T:)$ and $(F!)$ to the tableau rules of {\sf J}. For a justification logic \JL, a tableau branch of a $\JL_\CS$-tableau closes if one of the following holds:
\begin{enumerate}
\item Both $A$ and $\neg A$ occurs in the branch, for some formula $A$.
\item $\bot$ occurs  in the branch.
\item $\neg c:F$ occurs in the branch, for some $c:F\in\CS$.
\end{enumerate}

A tableau closes if all branches of the tableau close. A $\JL_\CS$-tableau proof for formula $F$ is a closed tableau beginning with $\neg F$ (the root of the tableau) using only tableau rules of $\JL_\CS$. A $\JL_\CS$-tableau for a finite set $S$ of $\JL$-formulas begins with a single branch whose nodes consist of the formulas of $S$ as roots.

\begin{example}
We give a ${\sf J}_\CS$-tableau proof of $x:A \r c \cdot x:(B\r A)$, where $\CS$ contains $c:(A\r (B\r A))$.
\begin{prooftree}
\alwaysNoLine
\AXC{$1.~\neg(x:A \r c \cdot x:(B\r A))$}
\UIC{$|$}
\UIC{$2.~x:A$}
\UIC{$|$}
\UIC{$3.~\neg c \cdot x:(B\r A)$}
\UIC{$\diagup~~~~~~~~~~~~~~~~~~~~\diagdown$}
\UIC{$4.~\neg c:(A\r (B\r A))~~~5.~\neg x:A$}
\UIC{$\otimes~~~~~~~~~~~~~~~~~~~~~~~~~~~~\otimes$}
\end{prooftree}
Formulas 2 and 3 are from 1 by rule $(F\r)$, and 4 and 5 are from 3  by rule $(F\cdot)$. Closed branches are indicated by $\otimes$.
\end{example}

\begin{table}[ht]
\centering\renewcommand{\arraystretch}{1.2}
\begin{tabular}{|lc|}
\hline
~Propositional rules: &\\
\multicolumn{2}{|c|}{
\AXC{$\neg\neg A$}\RightLabel{$(F\neg)$}
\UIC{$A$}\noLine
\UIC{}
\DP
}\\

\AXC{$\neg(A\rightarrow B)$}\RightLabel{$(F\r)$}
\UIC{$A$}\noLine
\UIC{$\neg B$}\noLine
\UIC{}
\DisplayProof
&
\AXC{$A\rightarrow B$}\RightLabel{$(T\r)$}
\UIC{$\neg A | B$}
\DP
\\
~Justification rules:&\\
\AXC{$\neg t+s:A$}\RightLabel{$(F+)$}
\UIC{$\neg t:A$}\noLine
\UIC{$\neg s:A$}\noLine
\UIC{}
\DisplayProof

&
\AXC{$\neg s\cdot t:B$}
\RightLabel{$(F\cdot)$}
\UIC{$\neg s:(A\rightarrow B) | \neg t:A$}\noLine
\UIC{}
\DisplayProof
\\\hline
\end{tabular}\vspace{0.3cm}
 \caption{Tableau rules for basic justification logic ${\sf J}$.}\label{table:tableau rules J}
\end{table}

\begin{table}[ht]
\centering\renewcommand{\arraystretch}{2}
\begin{tabular}{|l|c|}
\hline
 ~Justification axiom & Tableau rule~ \\\hline
 ~{\bf jT}. $t:A\r A$ &
 \AXC{$t:A$} \RightLabel{$(T:)$}
 \UIC{$A$}
 \DP \\\hline
 ~{\bf jD}. $t:\bot\r\bot$  &
 \AXC{} \RightLabel{$(F:_\bot)$}
 \UIC{$\neg t:\bot$}
 \DP
  \\\hline
 ~{\bf j4}. $t:A\r !t:t:A$  &
 \AXC{$\neg !t:t:A$} \RightLabel{$(F!)$}
 \UIC{$\neg t:A$}
 \DP
 \\\hline
 ~{\bf jB}. $\neg A\r \bar{?}t:\neg t:A$ &
 \AXC{$\neg \bar{?}t:\neg t:A$} \RightLabel{$(F\bar{?})$}
 \UIC{$A$}
 \DP
  \\\hline
 ~{\bf j5}. $\neg t:A\r ?t:\neg t:A$ &
 \AXC{$\neg ?t:\neg t:A$} \RightLabel{$(F?)$}
 \UIC{$t:A$}
 \DP
  \\
  \hline
\end{tabular}\vspace{0.3cm}
\caption{Justification axioms with corresponding
tableau rules.}\label{table:tableau rules JL}
\end{table}

Let us show the soundness and completeness of tableau systems with respect to M-models. Our starting point is the following lemma, whose proof is straightforward and is omitted here.

\begin{lemma}\label{lem: soundness lemma}
Let $\pi$ be any branch of a $\JL_\CS$-tableau and $\M$ be a $\JL_\CS$-model that satisfies all the formulas occur in $\pi$. If a $\JL_\CS$-tableau rule is applied to $\pi$, then it produces at least one extension $\pi'$ such that $\M$ satisfies all the formulas occur in $\pi'$.
\end{lemma}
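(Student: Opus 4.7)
The proof is by a straightforward case analysis on which tableau rule is applied to $\pi$. The non-branching rules produce a single extension $\pi'$, and we must check that every formula newly added to $\pi$ is true in $\M$; the branching rules $(T\r)$ and $(F\cdot)$ produce two extensions, and we must check that at least one of the two new formulas is true in $\M$. In every case the argument reduces to unfolding the forcing clauses of Definition~\ref{def:forcing relation} together with the appropriate evidence condition from Definitions~\ref{Kripke-Fitting models J} and~\ref{Kripke-Fitting models JL}.

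The propositional cases are immediate from the Boolean clauses of the forcing relation. For the justification rules of Table~\ref{table:tableau rules J}: for $(F+)$ applied to $\neg(s+t):A$, the contrapositive of $\E 2$ gives $A\notin\E(s)$ and $A\notin\E(t)$, hence $\M\Vdash\neg s:A$ and $\M\Vdash\neg t:A$; for $(F\cdot)$ applied to $\neg s\cdot t:B$ with active formula $A$, the contrapositive of $\E 1$ yields $A\r B\notin\E(s)$ or $A\notin\E(t)$, so at least one of the two branches is satisfied by $\M$. The axiom-specific rules from Table~\ref{table:tableau rules JL} are handled similarly: $(T:)$ uses the jT clause of the forcing definition; $(F:_\bot)$ is sound because $\E 4$ delivers $\bot\notin\E(t)$ for every $t$, so $\M\Vdash\neg t:\bot$ is automatic; and $(F!)$, $(F\bar{?})$, $(F?)$ are each justified by the contrapositive of $\E 5$, $\E 6$, $\E 7$ respectively.

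The main place where care is required is when $\JL$ contains jT, because then the clause for $t:A$ is a \emph{conjunction} ``$A\in\E(t)$ and $\M\Vdash A$'', so $\M\Vdash\neg t:A$ unfolds to the \emph{disjunction} ``$A\notin\E(t)$ or $\M\not\Vdash A$''. This forces additional sub-case analyses in, for example, $(F\cdot)$ and $(F!)$. For $(F\cdot)$ in the jT case, if $\M\not\Vdash B$ then either $\M\not\Vdash A$ (giving $\M\Vdash\neg t:A$) or $\M\Vdash A$ (which with $\M\not\Vdash B$ gives $\M\not\Vdash A\r B$ and hence $\M\Vdash\neg s:(A\r B)$), so one of the branches is still satisfied. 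For $(F\bar{?})$ the extra sub-case $\M\Vdash t:A$ immediately yields $\M\Vdash A$ via the jT clause. Once these sub-cases are tracked rule by rule, each application of a $\JL_\CS$-tableau rule to $\pi$ produces some extension $\pi'$ satisfied by $\M$, which is the content of the lemma.
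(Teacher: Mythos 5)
The paper itself omits the proof of this lemma as ``straightforward'', and your rule-by-rule case analysis, unfolding the forcing clauses against the evidence conditions $\E 1$--$\E 7$, is exactly the intended argument. The propositional cases, $(F+)$, $(T:)$, $(F:_\bot)$, and the jT sub-cases you track for $(F\cdot)$, $(F!)$ and $(F\bar{?})$ are all handled correctly.

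There is, however, a genuine gap at the one justification rule you did not re-examine under jT, namely $(F?)$, which matters precisely for the logics containing both jT and j5 (e.g.\ ${\sf JT45}$). From $\M\Vdash\neg ?t:\neg t:A$ the conjunctive jT clause only gives the disjunction ``$\neg t:A\notin\E(?t)$ or $\M\Vdash t:A$''. In the first disjunct the contrapositive of $\E 7$ yields $A\in\E(t)$, but under the truth clause of Definition \ref{def:forcing relation} this is not yet $\M\Vdash t:A$: you also need $\M\Vdash A$, and nothing in your argument supplies it. Reading Definitions \ref{Kripke-Fitting models J} and \ref{Kripke-Fitting models JL} literally, the step in fact fails: take $\CS=\emptyset$, $\V(p)=0$, $\E(x)=\{p\}$, $\E(?s)=\{\neg s:B \mid B\notin\E(s)\}$ for every term $s$, and extend $\E$ minimally to the remaining terms so that $\E 1$, $\E 2$, $\E 5$ hold; this $\M$ satisfies $\neg ?x:\neg x:p$ while falsifying $x:p$, so the extension produced by $(F?)$ is not satisfied. (The same model falsifies axiom j5, so the difficulty traces back to the paper's rendering of the jT semantics via a modified truth clause rather than to your overall strategy.) The standard repair, and evidently the intended reading of the M-models imported from Kuznets--Studer, is to demand for jT-logics that evidence be factive: $A\in\E(t)$ implies $\M\Vdash A$. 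With that condition the missing conjunct $\M\Vdash A$ follows immediately from $A\in\E(t)$ and your case analysis closes; you should either invoke this condition explicitly in the $(F?)$ case or state that the lemma is proved under that formulation of the semantics.
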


\begin{theorem}[Soundness]
If $A$ has a $\JL_\CS$-tableau proof, then it is $\JL_\CS$-valid.
\end{theorem}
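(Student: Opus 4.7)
The plan is to prove the contrapositive via a standard model-tracing argument: if $A$ is not $\JL_\CS$-valid, then every $\JL_\CS$-tableau beginning with $\neg A$ has an open branch. Start by fixing a $\JL_\CS$-model $\M$ with $\M\not\Vdash A$, which by clause~3 of Definition~\ref{def:forcing relation} gives $\M\Vdash\neg A$; this tells us $\M$ satisfies the singleton branch consisting of the root.

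Next I would show, by induction on the construction of the tableau, that at every stage there is at least one branch $\pi$ of the current tableau such that $\M\Vdash F$ for every formula $F$ occurring on $\pi$. The base case is the observation just made about the root. The inductive step is exactly Lemma~\ref{lem: soundness lemma}: if the current satisfied branch is the one extended by a rule application, then the lemma supplies an extension $\pi'$ that $\M$ still satisfies; if the rule is applied elsewhere, $\pi$ itself is retained. Taking the tableau proof of $A$, which by assumption is closed, we obtain in particular a closed branch $\pi$ all of whose formulas are satisfied by $\M$.

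To finish, I would run through the three closure conditions and derive a contradiction in each. If $\pi$ contains both $B$ and $\neg B$, then $\M\Vdash B$ and $\M\Vdash\neg B$, contradicting clause~3 of Definition~\ref{def:forcing relation}. If $\bot$ occurs on $\pi$, then $\M\Vdash\bot$, contradicting clause~1. Finally, if $\neg c{:}F$ occurs on $\pi$ for some $c{:}F\in\CS$, then $\M\not\Vdash c{:}F$; but every $\JL_\CS$-model respects $\CS$ (as noted after Definition~\ref{Kripke-Fitting models JL}), so $\M\Vdash c{:}F$, again a contradiction.

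I expect the argument to be almost entirely routine, with the only mildly delicate point being the third closure case, where one must invoke the fact that the admissible-evidence condition $\mathcal{E}3$ (together with the semantic clause for $t{:}A$, including the variant used when jT is present) forces $\M\Vdash c{:}F$ whenever $c{:}F\in\CS$; this is precisely what ``$\M$ respects $\CS$'' means and is what ties closure condition~3 to semantic validity. No further obstacle is anticipated, so no separate treatment of the extensions is needed beyond observing that Lemma~\ref{lem: soundness lemma} already covers all rules of Tables~\ref{table:tableau rules J} and~\ref{table:tableau rules JL}.
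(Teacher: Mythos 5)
Your proposal is correct and follows essentially the same route as the paper: the paper's proof is just a compressed version of your argument, invoking Lemma~\ref{lem: soundness lemma} to conclude that no tableau starting from $\neg A$ can close when $\M\Vdash\neg A$. Your explicit induction tracing a satisfied branch and your case analysis of the three closure conditions (including the use of ``$\M$ respects $\CS$'' for condition~3) merely fill in details the paper leaves to the reader.
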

\begin{proof}
If $A$ is not $\JL_\CS$-valid, then there is a $\JL_\CS$-model $\M$ such that $\M\Vdash \neg A$. Thus by Lemma \ref{lem: soundness lemma}, there is no closed $\JL_\CS$-tableau beginning with $\neg A$. Therefore, $A$ does not have a $\JL_\CS$-tableau proof.\qed
\end{proof}

Next we shall prove the completeness theorem, by making use of maximal consistent sets.

\begin{definition}
Suppose $\Gamma$ is a set of $\JL$-formulas. $\Gamma$ is (tableau) $\JL_\CS$-consistent if there is no closed tableau beginning with any finite subset of $\Gamma$. $\Gamma$ is maximal if it has no proper tableau consistent extension.
\end{definition}

It is known  that every $\JL_\CS$-consistent set has a maximally $\JL_\CS$-consistent extension (Lindenbaum Lemma).

It is easy to show that maximally $\JL_\CS$-consistent sets are  closed under $\JL_\CS$-tableau rules. For a non-branching rule like
\begin{prooftree}
\AXC{$\alpha$}
\UIC{$\alpha_1$}\noLine
\UIC{$\alpha_2$}
\end{prooftree}
this means that if $\alpha$ is in a maximally $\JL_\CS$-consistent set $\Gamma$, then both $\alpha_1\in\Gamma$ and $\alpha_2\in\Gamma$. For a branching rule like
\[\di{\frac{\beta}{\beta_1 | \beta_2}}\]
this means that if $\beta$ is in a maximally $\JL_\CS$-consistent set $\Gamma$, then $\beta_1\in\Gamma$ or $\beta_2\in\Gamma$. For the rule $(F\cdot)$ this means that if $\neg s \cdot t :  B\in\Gamma$, then for every formula $A$  either $\neg s: (A\r B) \in\Gamma$ or $\neg t: A \in\Gamma$.

\begin{lemma}\label{lem:downward saturated}
Suppose $\Gamma$ is a maximally $\JL_\CS$-consistent set. Then $\Gamma$ is closed under $\JL_\CS$-tableau rules.
\end{lemma}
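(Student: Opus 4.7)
The proof is by case analysis on the $\JL_\CS$-tableau rules. For each rule I would assume the premise lies in $\Gamma$ and, using the maximality of $\Gamma$, conclude that the rule's outputs also lie in $\Gamma$. The uniform trick is the following: if a formula $\varphi$ is not in a maximally $\JL_\CS$-consistent set $\Gamma$, then by maximality $\Gamma \cup \{\varphi\}$ is $\JL_\CS$-inconsistent, so there is a closed $\JL_\CS$-tableau starting from some finite subset of $\Gamma$ together with $\varphi$. I would then show how to splice in one application of the rule under consideration to produce a closed tableau whose root is already in $\Gamma$, contradicting the consistency of $\Gamma$.

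Concretely, for a non-branching rule with premise $\alpha$ and outputs $\alpha_1,\alpha_2$, suppose $\alpha\in\Gamma$ but (without loss of generality) $\alpha_1\notin\Gamma$. By the trick above there is a closed tableau from a finite $\Gamma_0\subseteq\Gamma$ augmented by $\alpha_1$. Since $\alpha\in\Gamma$, we may start a tableau from $\Gamma_0\cup\{\alpha\}$, apply the rule to $\alpha$ to add $\alpha_1$ (and $\alpha_2$) to the single branch, and then continue with the given closed tableau; this yields a closed tableau from a finite subset of $\Gamma$, contradicting the consistency of $\Gamma$. The same template handles $(F\neg)$, $(F{\r})$, $(F+)$, $(T:)$, $(F!)$, $(F\bar{?})$, $(F?)$. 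The rule $(F:_\bot)$ has no premise; here I would simply observe that if $\neg t{:}\bot\notin\Gamma$ for some $t$, then the closed tableau witnessing the inconsistency of $\Gamma\cup\{\neg t{:}\bot\}$ could be preceded by a single application of $(F:_\bot)$ to close a tableau from $\Gamma$ alone.

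For a branching rule with premise $\beta$ and branches $\beta_1 \mid \beta_2$, assume $\beta\in\Gamma$ and that neither $\beta_1$ nor $\beta_2$ is in $\Gamma$. Then both $\Gamma\cup\{\beta_1\}$ and $\Gamma\cup\{\beta_2\}$ are inconsistent, so there are closed tableaux $T_1,T_2$ witnessing this, from finite subsets $\Gamma_1,\Gamma_2\subseteq\Gamma$. Starting from $\Gamma_1\cup\Gamma_2\cup\{\beta\}$, I apply the rule to $\beta$, obtaining two branches; on the $\beta_i$-branch I append $T_i$, and each branch closes. This handles $(T{\r})$ and, with one extra wrinkle, $(F\cdot)$.

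The only step that needs a little care is $(F\cdot)$, whose output schema $\neg s{:}(A\r B)\mid\neg t{:}A$ carries the free parameter $A$: I must show that for \emph{every} formula $A$, either $\neg s{:}(A\r B)\in\Gamma$ or $\neg t{:}A\in\Gamma$ whenever $\neg s\cdot t{:}B\in\Gamma$. Fixing an arbitrary $A$ and applying exactly the branching argument above with $\beta_1:=\neg s{:}(A\r B)$ and $\beta_2:=\neg t{:}A$ settles this: the rule may legitimately be applied to $\neg s\cdot t{:}B$ with this particular choice of $A$, so the same splicing closes a tableau from a finite subset of $\Gamma$. This is the one point where the schematic (parametric) form of a rule matters, and it is the only mildly subtle step in an otherwise routine case analysis.
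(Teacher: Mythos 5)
Your proposal is correct and matches the paper's own argument: both use maximality to turn a missing output into an inconsistent extension, extract closed tableaux for finite subsets, and splice in one application of the rule to contradict the consistency of $\Gamma$, with the same explicit treatment of the parametric rule $(F\cdot)$ and the premise-free rule $(F:_\bot)$. The paper merely details fewer cases (calling the rest standard or similar), so your uniform template adds detail but no new idea.
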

\begin{proof}
The proof for propositional rules $(F\neg)$, $(F\r)$, and $(T\r)$ are standard. For justification rules,  we detail the proof only for the rules $(F\cdot)$ and $(F:_\bot)$. The proof for the other tableau justification rules is similar.

For $(F\cdot)$, suppose $\Gamma$ is a maximally $\JL_\CS$-consistent set and  $\neg s \cdot t :B\in\Gamma$. Suppose towards a contradiction that for some formula $A$ we have $\neg s:(A\r B)\not\in\Gamma$ and $\neg t:A\not\in\Gamma$. Since $\Gamma$ is maximal, we have $\Gamma \cup \{ \neg s: (A\r B) \}$ and $\Gamma \cup \{ \neg t: A\}$ are not tableau $\JL_\CS$-consistent. Thus there are closed $\JL_\CS$-tableaux for finite subsets, say $\Gamma_1 \cup \{ \neg s: (A\r B) \}$ and $\Gamma_2 \cup \{ \neg t:  A\}$. But $\Gamma_1 \cup \Gamma_2 \cup \{\neg s \cdot t :  B \}$ is a finite subset of $\Gamma$ and, using rule $(F\cdot)$, there is a closed $\JL_\CS$-tableau for it, contra the tableau $\JL_\CS$-consistency of $\Gamma$.

For $(F:_\bot)$, suppose towards a contradiction that $\neg t:\bot \not\in \Gamma$, for some term $t$. Then, $\Gamma \cup \{ \neg t:\bot \}$ is not tableau $\JL_\CS$-consistent. Thus, there is a closed $\JL_\CS$-tableau for a finite subset, say $\Gamma_0 \cup \{ \neg t:\bot \}$. Using rule $(F:_\bot)$, there is a closed $\JL_\CS$-tableau for  $\Gamma_0$,  contra the tableau $\JL_\CS$-consistency of $\Gamma$. Therefore, $\neg t:\bot \in \Gamma$, for any term $t$.\qed
\end{proof}

\begin{definition}\label{def:canonical model tableau}
Given a maximally $\JL_\CS$-consistent set $\Gamma$, the canonical model $\M=(\E,\V)$ with respect to $\Gamma$ is defined as follows:
\begin{itemize}
\item $\E(t) = \{A~|~\neg t:A \not\in \Gamma\}$.
\item $\V(p)=1$ if{f} $p\in\Gamma$, where $p\in\mathcal{P}$.
\end{itemize}
\end{definition}

\begin{lemma}[Truth Lemma]
Suppose $\Gamma$ is a maximally $\JL_\CS$-consistent set and $\M=(\E,\V)$ is the canonical model with respect to $\Gamma$. Then for every $\JL$-formula $F$:

\begin{enumerate}
\item $F\in\Gamma$ implies $\M\Vdash F$.

\item $\neg F\in\Gamma$ implies $\M \not\Vdash F$.
\end{enumerate}
\end{lemma}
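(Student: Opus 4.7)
The plan is to prove (1) and (2) simultaneously by induction on the structure of $F$, using tableau-consistency of $\Gamma$ (so $F$ and $\neg F$ are never both in $\Gamma$ and $\bot\not\in\Gamma$) together with the closure properties established in Lemma \ref{lem:downward saturated}. For the atomic base case $F=p$, both claims are immediate from the definition of $\V$ and consistency; for $F=\bot$, (1) is vacuous since consistency prevents $\bot\in\Gamma$, while (2) follows from the forcing clause $\M\not\Vdash\bot$.

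For the propositional steps ($F=\neg A$ and $F=A\r B$), I would propagate membership in $\Gamma$ to the appropriate subformulas via closure under $(F\neg)$, $(F\r)$ and $(T\r)$, and then apply the induction hypothesis. For instance, if $A\r B\in\Gamma$, closure under $(T\r)$ gives $\neg A\in\Gamma$ or $B\in\Gamma$, and IH then yields $\M\not\Vdash A$ or $\M\Vdash B$, hence $\M\Vdash A\r B$; the case $\neg(A\r B)\in\Gamma$ is handled analogously with $(F\r)$, and the negation case with $(F\neg)$ plus consistency.

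The key case is $F=t:A$, which splits on whether $\JL$ contains axiom jT. When jT is absent, the forcing clause gives $\M\Vdash t:A$ iff $A\in\E(t)$, equivalently iff $\neg t:A\not\in\Gamma$; thus (1) follows from consistency (if $t:A\in\Gamma$ then $\neg t:A\not\in\Gamma$), and (2) holds directly from the definition of $\E$. When jT is present, the forcing clause additionally demands $\M\Vdash A$; for (1) I would use closure under $(T:)$ to deduce $A\in\Gamma$ from $t:A\in\Gamma$, then invoke IH on $A$ to obtain $\M\Vdash A$, while consistency still yields $A\in\E(t)$. Part (2) is unchanged, since $\neg t:A\in\Gamma$ forces $A\not\in\E(t)$ regardless of jT.

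The main obstacle is precisely this jT subcase: it is the only place where part (1) does not reduce to an immediate consequence of the definition of $\E$ together with consistency, because the strengthened forcing clause for $t:A$ forces us to bring in the induction hypothesis on the proper subformula $A$. The closure of $\Gamma$ under $(T:)$ supplied by Lemma \ref{lem:downward saturated} is exactly what licenses this step. The remaining extension rules ($(F!)$, $(F\bar{?})$, $(F?)$ and $(F:_\bot)$) play no role in the Truth Lemma itself; their job is to ensure that the canonical $\M$ satisfies the admissible-evidence conditions $\E4$--$\E7$, a separate verification that is required to conclude the completeness theorem but lies outside the scope of this lemma.
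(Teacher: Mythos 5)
Your proposal is correct and follows essentially the same route as the paper: induction on the complexity of $F$, with the standard base and propositional cases, and the $t:A$ case handled by consistency (to get $A\in\E(t)$ from $t:A\in\Gamma$) plus, when jT is present, closure of $\Gamma$ under $(T:)$ and the induction hypothesis on $A$. Your closing observation that the rules $(F!)$, $(F\bar{?})$, $(F?)$, $(F:_\bot)$ are needed only for verifying that the canonical model is a $\JL_\CS$-model also matches the paper's division of labour.
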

\begin{proof}
By induction on the complexity of $F$. The base case and the propositional inductive cases are standard. The proof for the case that $F=t:A$ is as follows.

Suppose that $t:A\in\Gamma$. Since $\Gamma$ is tableau $\JL_\CS$-consistent, $\neg t:A \not\in \Gamma$. Thus $A \in \E(t)$. If $\JL$ does not contain axiom jT, then $\M\Vdash t:A$ as desired.  If $\JL$ contains axiom jT, then since $\Gamma$ is closed under $(T:)$, $A \in \Gamma$. Thus, by the induction hypothesis, $\M\Vdash A$. Hence $\M\Vdash t:A$.

Suppose that $\neg t:A\in\Gamma$. Thus $A \not\in \E(t)$, and hence   $\M \not\Vdash t:A$. \qed
\end{proof}

\begin{lemma}
Given a maximally $\JL_\CS$-consistent set $\Gamma$, the canonical model $\M=(\E,\V)$ with respect to $\Gamma$ is a $\JL_\CS$-model.
\end{lemma}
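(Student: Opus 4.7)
The plan is to check each of the conditions on $\E$ from Definitions \ref{Kripke-Fitting models J} and \ref{Kripke-Fitting models JL} one by one, in each case reducing the required closure property of $\E(t)$ to the closure of $\Gamma$ under the corresponding tableau rule (via Lemma \ref{lem:downward saturated}) together with tableau $\JL_\CS$-consistency. Recall that by construction $A\in\E(t)$ iff $\neg t:A\notin\Gamma$, so every condition on $\E$ translates straightforwardly into a statement about membership of certain negated formulas in $\Gamma$.

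For the basic conditions: for $\E1$, suppose $A\r B\in\E(s)$ and $A\in\E(t)$; assuming for contradiction that $\neg s\cdot t:B\in\Gamma$, applying $(F\cdot)$ to this formula with $A$ (using Lemma \ref{lem:downward saturated}) forces either $\neg s:(A\r B)\in\Gamma$ or $\neg t:A\in\Gamma$, contradicting the assumptions. For $\E2$, if $\neg(s+t):A\in\Gamma$, rule $(F+)$ puts both $\neg s:A$ and $\neg t:A$ in $\Gamma$, so $\E(s)\cup\E(t)\subseteq\E(s+t)$ by contrapositive. For $\E3$, if $c:F\in\CS$ then $\neg c:F\notin\Gamma$ by the third closure clause (a branch containing $\neg c:F$ would close), hence $F\in\E(c)$.

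For the extended conditions, each is handled by the tableau rule corresponding to its axiom. For $\E4$ (jD), the argument is already contained in the proof of Lemma \ref{lem:downward saturated} for $(F:_\bot)$: $\neg t:\bot\in\Gamma$ for every $t$, so $\bot\notin\E(t)$. For $\E5$ (j4), if $\neg !t:t:A\in\Gamma$ then $(F!)$ yields $\neg t:A\in\Gamma$, so by contrapositive $A\in\E(t)$ implies $t:A\in\E(!t)$. For $\E6$ (jB), if $\neg\bar{?}t:\neg t:A\in\Gamma$ then $(F\bar{?})$ gives $A\in\Gamma$, so by the Truth Lemma $\M\Vdash A$; contrapositively, $\M\not\Vdash A$ implies $\neg t:A\in\E(\bar{?}t)$. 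For $\E7$ (j5), if $\neg ?t:\neg t:A\in\Gamma$ then $(F?)$ gives $t:A\in\Gamma$, but also $A\notin\E(t)$ means $\neg t:A\in\Gamma$, and the simultaneous presence of $t:A$ and $\neg t:A$ contradicts consistency; thus $A\notin\E(t)$ implies $\neg t:A\in\E(?t)$.

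There is no genuine obstacle here; the proof is a routine case analysis. The only mildly subtle step is $\E6$, where the use of the inductive Truth Lemma (rather than a purely syntactic fact about $\Gamma$) is essential because the condition is formulated in terms of the forcing relation $\M\not\Vdash A$. All other cases follow directly from the shape of the tableau rule paired with the axiom, the definition of $\E$, and maximal tableau $\JL_\CS$-consistency of $\Gamma$. \qed
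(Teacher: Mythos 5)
Your proof is correct and follows essentially the same route as the paper: each condition on $\E$ is reduced, via the definition $A\in\E(t)$ iff $\neg t:A\notin\Gamma$, to closure of $\Gamma$ under the corresponding tableau rule (Lemma \ref{lem:downward saturated}) together with tableau consistency, with the Truth Lemma invoked for $\E 6$ exactly as in the paper. Your handling of $\E 7$ is in fact slightly more explicit than the paper's, spelling out the consistency step that rules out $t:A\in\Gamma$.
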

\begin{proof}
Suppose $\Gamma$ is a maximally $\JL_\CS$-consistent set and $\M=(\E,\V)$ is the canonical model with respect to $\Gamma$. We  shall show that the admissible evidence function $\E$ satisfies the corresponding conditions stated in the definition of $\JL_\CS$-models.

For $\E 1$, suppose that $A\in\E(t)$ and $A\r B\in\E(s)$. We have to show that $B\in\E(s\cdot t)$. By the definition of $\E$,   $\neg t:A\not\in\Gamma$ and $\neg s:(A\r B)\not \in\Gamma$. By Lemma \ref{lem:downward saturated}, $\Gamma$ is closed under rule $(F\cdot)$, and hence $\neg s\cdot t:B\not\in\Gamma$. Hence, by the definition of $\E$, $B\in\E(s\cdot t)$.

For  $\E 2$, suppose that $A\in\E(s)\cup \E(t)$. We have to show that $A\in\E(s+t)$. If $A\in\E(s)$, then $\neg s:A \not\in\Gamma$. By Lemma \ref{lem:downward saturated}, $\Gamma$ is closed under rule $(F+)$, and hence $\neg s+t:A\not\in\Gamma$. Therefore,  $A\in\E(s+t)$. The case that $A\in\E(t)$ is similar.

For  $\E 3$, suppose that $c:F\in\CS$. We have to show that $F\in\E(c)$. Since $\Gamma$ is $\JL_\CS$-consistent, $\neg c:F\not\in\Gamma$. Thus  $F\in\E(c)$.

For  $\E 4$, where $\JL$ contains axiom jD, by Lemma \ref{lem:downward saturated} we have $\neg t:\bot \in\Gamma$ for any term $t\in Tm_\JL$.   Thus  $\bot\not\in\E(t)$.

For  $\E 5$, where $\JL$ contains axiom j4, suppose that $A\in\E(t)$. We have to show that $t:A\in\E(!t)$. By the definition of $\E$, $\neg t:A \not\in\Gamma$. By Lemma \ref{lem:downward saturated}, $\Gamma$ is closed under rule $(F!)$, and hence $\neg !t:t:A\not\in\Gamma$. Therefore,  $t:A\in\E(!t)$.

For  $\E 6$, where $\JL$ contains axiom jB, suppose that $\M\not\Vdash A$. We have to show that $\neg t:A\in\E(\bar{?} t)$. By the Truth Lemma, $A\not\in\Gamma$. By Lemma \ref{lem:downward saturated}, $\Gamma$ is closed under rule $(F\bar{?})$, and hence $\neg \bar{?} t:\neg t:A\not\in\Gamma$. Therefore, $\neg t:A\in\E(\bar{?} t)$.

For  $\E 7$, where $\JL$ contains axiom j5, suppose that $A\not\in\E(t)$. We have to show that $\neg t:A\in\E(? t)$. By the definition of $\E$,  $\neg t: A \in\Gamma$. By Lemma \ref{lem:downward saturated}, $\Gamma$ is closed under rule $(F?)$, and hence $\neg ?t:\neg t:A\not\in\Gamma$. Therefore,  $\neg t:A\in\E(?t)$.\qed
\end{proof}

\begin{theorem}[Completeness]\label{thm:completeness tableaux}
If $A$ is $\JL_\CS$-valid, then it has a $\JL_\CS$-tableau proof.
\end{theorem}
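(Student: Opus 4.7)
The plan is to prove the contrapositive: assume $A$ has no $\JL_\CS$-tableau proof, and construct a $\JL_\CS$-model in which $A$ fails. Since there is no closed $\JL_\CS$-tableau beginning with $\neg A$, the singleton $\{\neg A\}$ is (tableau) $\JL_\CS$-consistent.

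Next I would invoke the Lindenbaum Lemma, already stated in the paper, to extend $\{\neg A\}$ to a maximally $\JL_\CS$-consistent set $\Gamma$, so in particular $\neg A \in \Gamma$. Form the canonical model $\M = (\E, \V)$ with respect to $\Gamma$, as in Definition \ref{def:canonical model tableau}. The preceding lemma establishes that $\M$ is indeed a $\JL_\CS$-model, since $\Gamma$ is closed under all tableau rules (Lemma \ref{lem:downward saturated}) and respects $\CS$ (for any $c:F \in \CS$, $\JL_\CS$-consistency forbids $\neg c:F \in \Gamma$, so $F \in \E(c)$).

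Then I would apply the Truth Lemma to $\neg A \in \Gamma$, concluding $\M \not\Vdash A$. Hence $A$ is not $\JL_\CS$-valid, contradicting the assumption. This yields the theorem.

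There is no real obstacle here: all of the substantive work has already been done in Lemma \ref{lem:downward saturated}, in the lemma showing the canonical model is a $\JL_\CS$-model, and in the Truth Lemma. The only point requiring a bit of care is making sure that $\{\neg A\}$ is indeed tableau $\JL_\CS$-consistent in the sense of the paper's definition, i.e., that the non-existence of a closed tableau beginning with $\neg A$ is exactly the non-existence of a closed tableau for every finite subset of $\{\neg A\}$; since the only nonempty finite subset is $\{\neg A\}$ itself, this is immediate from the assumption that $A$ has no $\JL_\CS$-tableau proof.
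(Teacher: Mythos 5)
Your proposal is correct and follows essentially the same route as the paper's own proof: extend the consistent set $\{\neg A\}$ to a maximal $\JL_\CS$-consistent $\Gamma$ via Lindenbaum, take the canonical model, and apply the Truth Lemma to conclude $\M\not\Vdash A$. The extra remark about finite subsets of $\{\neg A\}$ is a harmless clarification that the paper leaves implicit.
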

\begin{proof}
If $A$ does not have a $\JL_\CS$-tableau proof, then $\{\neg A\}$ is a $\JL_\CS$-consistent set and can be extended to a maximal $\JL_\CS$-consistent set $\Gamma$. Since $\neg A\in\Gamma$, by the Truth Lemma, $\M\not\Vdash A$, where $\M$ is the canonical model of $\JL_\CS$ with respect to $\Gamma$. Therefore $A$ is not $\JL_\CS$-valid.\qed
\end{proof}

Clearly in any $\JL_\CS$-tableau system the rule $(F\cdot)$ 
\[
\AXC{$\neg s\cdot t:B$}
\RightLabel{$(F\cdot)$}
\UIC{$\neg s:(A\rightarrow B) | \neg t:A$}\noLine
\UIC{}
\DisplayProof
\]
is not analytic, because the formula $A$ in the conclusion of the rule could be a new formula from the outside of the proof. The rule $(F:_\bot)$ is not analytic too. In the following section we replace these rules with analytic rules.
\subsection{$\JL^\mathcal{T}$-tableaux}\label{sec:JL^T tableaux}

In this section we present analytic tableaux for justification logics. The rule $(F\cdot)$ is replaced with the analytic non-branching rule  $(T\cdot)$ (see Table \ref{table:J^T}) and $(F:_\bot)$ is replaced with an analytic rule. The rule $(T\cdot)$ was introduced by Finger in \cite{Finger2010} in a tableau proof system for the logic of proofs based on KE tableaux (cf. \cite{D’Agostino1992,D’Agostino1999,D’AgostinoMondadori1994}).\footnote{It is worth noting that Finger's completeness proof of KE tableaux for the logic of proofs in \cite{Finger2010} contains a mistake. In fact, he wrongly claimed that every $\LP$-tableau proof (see Section \ref{sec:Tableaux 1}) can be simulated by KE tableaux of $\LP$. Then he used the completeness of $\LP$-tableaux to show that KE tableau system of $\LP$ is complete.} The tableau proof system of this section is similar to KE tableaux, with the difference that its propositional logic rules is the same as Smullyan's rules \cite{Smullyan1968}. A restricted form of the cut rule, called the principle of bivalence in \cite{D’Agostino1992,D’Agostino1999,D’AgostinoMondadori1994} and denoted by $(PB)$, is also added to the rules. In order to make the rules $(T\cdot)$ and $(PB)$ analytic we put some restrictions on the application of these rules. Let us first extend the definition of subformulas of a  formula to include constant specifications.

\begin{definition}\label{def:subformula}
Let $\CS$ be a constant specification for $\JL$, and let $A$ and $B$ be $\JL$-formulas. $A$ is a $\JL_\CS$-subformula of $B$ if one of the following clauses holds:

\begin{enumerate}
\item $A=B$,

\item $B=\neg F$, and $A$ is a $\JL_\CS$-subformula of $F$,

\item $B=F\r G$, and $A$ is a $\JL_\CS$-subformula of $F$ or $G$,

\item $B=t:F$, and $A$ is a $\JL_\CS$-subformula of $F$,

\item $A=t:F$, where $t$ is a subterm of a term in $B$ and $F$ is a $\JL_\CS$-subformula of $B$,

\item $A$ is a $\JL_\CS$-subformula of  $c_{i_n}:c_{i_{n-1}}:\ldots:c_{i_1}:F\in\CS$, where $F$ is an axiom instance of $\JL$.

\item The relation of ``$\JL_\CS$-subformula of", defined in clauses 1-6, is extended by transitivity.
 \end{enumerate}

 $A$ is a weak $\JL_\CS$-subformula of $B$ if  $A$ is either a $\JL_\CS$-subformula of $B$ or the negation of a $\JL_\CS$-subformula of $B$.
\end{definition}

Tableau rules for basic justification logic {\sf J} are given in Table \ref{table:J^T}. We denote this tableau system by ${\sf J}^\mathcal{T}$. For extensions of {\sf J}, tableau rules corresponding to axioms from Table \ref{table:tableau rules JL} should be added to the rules of ${\sf J}^\mathcal{T}$, except that in those justification logics that contain axiom jD the rule $(F:_\bot)$ is replaced by the following rule: 

\[ \di{\frac{t:\bot}{\bot}} (T:_\bot)\]

 The closure conditions are the same as $\JL$-tableaux. For a justification logic $\JL$, the resulting tableau system is denoted by $\JL^\mathcal{T}$.

 Note that in $\JL^\mathcal{T}$-tableaux the rules $(T\cdot)$ and $(PB)$ have restrictions on their applications (see Table \ref{table:J^T}). The formula $A$ in the conclusion of $(PB)$ is called the $PB$-formula. Furthermore, the rule $(T\cdot)$ is a binary rule (it takes two formulas as input), and it should be read as follows: if a branch contains $s:(A\rightarrow B)$ and $t:A$, then we can extend that branch by adding $s\cdot t:B$, provided that the formulas $s:(A\rightarrow B)$, $t:A$, and $s\cdot t:B$ are all $\JL_\CS$-subformulas of the root of the tableau. In addition, there is no ordering intended on the input $s:(A\rightarrow B)$, $t:A$.

\begin{table}
\centering\renewcommand{\arraystretch}{1.5}
\begin{tabular}{|p{1.5in}p{1.5in}p{1in}|}
\hline
\multicolumn{3}{|l|}{~Propositional rules:}\\
\AXC{$\neg\neg A$}\RightLabel{$(F\neg)$}
\UIC{$A$}\noLine
\UIC{}
\DP
&
 \AXC{$\neg(A\rightarrow B)$}\RightLabel{$(F\r)$}
\UIC{$A$}\noLine
\UIC{$\neg B$}\noLine
\UIC{}
\DisplayProof
&
\AXC{$A\rightarrow B$}\RightLabel{$(T\r)$}
\UIC{$\neg A | B$}
\DP
\\
\multicolumn{3}{|l|}{~Justification rules:}\\
\AXC{$\neg t+s:A$}\RightLabel{$(F+_L)$}
\UIC{$\neg t:A$}\noLine
\UIC{}
\DisplayProof

&
\AXC{$\neg t+s:A$}\RightLabel{$(F+_R)$}
\UIC{$\neg s:A$}\noLine
\UIC{}
\DisplayProof
&
\AXC{}\noLine
\UIC{$s:(A\rightarrow B)$}\noLine
\UIC{$t:A$}\RightLabel{$(T\cdot)$}
\UIC{$s\cdot t:B$}\noLine
\UIC{}
\DisplayProof
\\
\multicolumn{3}{|l|}{~Principle of Bivalence:}\\
\multicolumn{3}{|c|}{
\AXC{}\RightLabel{$(PB)$}
\UIC{$A~|~\neg A$}\noLine
\UIC{}
\DisplayProof}\\\hline
\multicolumn{3}{|l|}{ \parbox[b]{33em}{ In $(T\cdot)$ the formulas $s:(A\rightarrow B)$, $t:A$, and $s\cdot t:B$ are all  $\JL_\CS$-subformulas of the root of the tableau.}
}\\ \hline 
\multicolumn{3}{|l|}{In $(PB)$ the $PB$-formula $A$ is a $\JL_\CS$-subformula of the root of the tableau.}\\\hline
\end{tabular}\vspace{0.3cm}
 \caption{Tableau rules of ${\sf J}^\mathcal{T}$ for basic justification logic {\sf J}.}\label{table:J^T}
\end{table}

From Definition \ref{def:subformula} it is obvious that the following is an instance of $(PB)$:

\begin{prooftree}
\AXC{}\RightLabel{$(PB)$}
\UIC{$c_{i_n}:c_{i_{n-1}}:\ldots:c_{i_1}:A~|~\neg c_{i_n}:c_{i_{n-1}}:\ldots:c_{i_1}:A$}
\end{prooftree}

where $c_{i_n}:c_{i_{n-1}}:\ldots:c_{i_1}:A\in\CS$. 
Since the right branch is closed, it follows that the following rule is admissible in $\JL^\mathcal{T}_\CS$: 

\begin{prooftree}
\AXC{}
\UIC{$c_{i_n}:c_{i_{n-1}}:\ldots:c_{i_1}:A$}
\end{prooftree}
where $c_{i_n}:c_{i_{n-1}}:\ldots:c_{i_1}:A\in\CS$.

\begin{example}
We give a ${\sf J}_\CS^\mathcal{T}$-tableau proof of $x:A \r c \cdot x:(B\r A)$, where $\CS$ contains $c:(A\r (B\r A))$.

\vspace*{0.2cm}
\Tree [.$1.~\neg(x:A\r c\cdot x:(B\r A))$ [.$2.~x:A$ [.$3.~\neg c\cdot x:(B\r A)$ [.$4.~c:(A\r (B\r A))$ {$6.~c\cdot x:(B\r A)$ \\ $\otimes$} ] !\qsetw{5cm} {$5.~\neg c:(A\r (B\r A))$ \\ $\otimes$}  ] ]  ]
\vspace*{0.2cm}

Formulas 2 and 3 are from 1 by rule $(F\r)$, 4 and 5 are obtained by $(PB)$, and 6 from 2 and 4 by rule $(T\cdot)$. Note that in the application of $(PB)$ the $PB$-formula $c:(A\r (B\r A))$ is a ${\sf J}_\CS$-subformula of the root, and in the application of $(T\cdot)$ the formulas $x:A$, $c:(A\r (B\r A))$, and $c\cdot x:(B\r A)$ are ${\sf J}_\CS$-subformula of the root.
\end{example}

Soundness of tableau systems $\JL_\CS^\mathcal{T}$ is a consequence of the following lemma.

\begin{lemma}
Let $\pi$ be any branch of a $\JL^\mathcal{T}_\CS$-tableau and $\M$ be a $\JL_\CS$-model that satisfies all the formulas occur in $\pi$. If a $\JL^\mathcal{T}_\CS$-tableau rule is applied to $\pi$, then it produces at least one extension $\pi'$ such that $\M$ satisfies all the formulas occur in $\pi'$.
\end{lemma}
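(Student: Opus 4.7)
The plan is to proceed by case analysis on which tableau rule is applied to $\pi$. Many cases reduce directly to the corresponding cases of Lemma \ref{lem: soundness lemma}: the propositional rules $(F\neg)$, $(F\r)$, $(T\r)$ and the axiom-driven justification rules $(T:)$, $(F!)$, $(F\bar{?})$, $(F?)$ are identical between the two systems, and the argument only needs the forcing clauses of Definition \ref{def:forcing relation} together with the contrapositives of $\E 5$, $\E 6$, $\E 7$. The split rules $(F+_L)$ and $(F+_R)$ each handle one of the conjunctive consequences of $\E 2$: if $\M$ satisfies $\neg t+s:A$, then $A \not\in \E(t+s)$, so by $\E 2$ both $A \not\in \E(t)$ and $A \not\in \E(s)$ hold, and hence the single extension generated by either rule is satisfied by $\M$.

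Three cases are genuinely new. For $(T\cdot)$, the assumption that $\M$ satisfies both $s:(A\r B)$ and $t:A$ gives $A\r B \in \E(s)$ and $A \in \E(t)$, and then $\E 1$ yields $B \in \E(s \cdot t)$, so $\M \Vdash s \cdot t:B$ when $\JL$ lacks jT; when $\JL$ contains jT, the same hypotheses additionally give $\M \Vdash A\r B$ and $\M \Vdash A$, whence $\M \Vdash B$ by meta-level modus ponens, supplying the extra truth conjunct required by the jT forcing clause. For the replacement rule $(T:_\bot)$ in logics containing jD, the premise $t:\bot$ cannot be satisfied in any $\JL_\CS$-model (by $\E 4$, $\bot \not\in \E(t)$, hence $\M \not\Vdash t:\bot$), so this case is vacuously satisfied under the lemma's hypothesis that $\M$ satisfies all formulas of $\pi$. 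For $(PB)$, bivalence of truth in $\M$ gives that $\M \Vdash A$ or $\M \Vdash \neg A$, so exactly one of the two created branches is fully satisfied by $\M$, which is what the lemma asserts.

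The side conditions restricting the $PB$-formula and the input/output formulas of $(T\cdot)$ to be $\JL_\CS$-subformulas of the root play no role in the present soundness argument; they will only be used later to secure the subformula property and analyticity. The step that I expect to demand the most care is $(T\cdot)$ in the presence of jT, where the strengthened forcing clause forces one to separately verify the extra truth conjunct of the conclusion $s\cdot t:B$; once one observes that the very same clause, applied to the two hypotheses, already delivers $\M \Vdash A$ and $\M \Vdash A\r B$, the conjunct is dispatched by a single meta-level modus ponens, and the remainder of the proof is routine bookkeeping.
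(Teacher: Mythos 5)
Your proof is correct and is exactly the routine case analysis the paper intends: the paper omits the proof of this lemma (just as it omits the proof of the analogous soundness lemma for $\JL_\CS$-tableaux), and your treatment of $(T\cdot)$, $(T:_\bot)$ and $(PB)$, together with the observation that the subformula side conditions are irrelevant to soundness, is the expected argument. One small point of care: in logics containing jT, $\M\Vdash \neg t+s:A$ only gives $A\notin\E(t+s)$ \emph{or} $\M\not\Vdash A$, but either disjunct yields $\M\not\Vdash t:A$ (the first via $\E 2$, the second via the jT forcing clause), so your $(F+_L)$/$(F+_R)$ cases go through after this one-line adjustment.
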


\begin{theorem}[Soundness]
If $A$ has a $\JL^\mathcal{T}_\CS$-tableau proof, then it is $\JL_\CS$-valid.
\end{theorem}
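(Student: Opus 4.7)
The plan is to prove the contrapositive, following exactly the pattern of the soundness proof for $\JL_\CS$-tableaux in the previous subsection. Suppose $A$ is not $\JL_\CS$-valid; then there is a $\JL_\CS$-model $\M$ with $\M \Vdash \neg A$. Consider the initial tableau whose only node is $\neg A$; trivially $\M$ satisfies every formula on this branch. From here the whole argument reduces to the preceding lemma, which does the semantic work rule by rule.

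Next I would show, by induction on the number of rule applications, that every $\JL^\mathcal{T}_\CS$-tableau built on top of $\{\neg A\}$ contains at least one branch all of whose formulas are satisfied by $\M$. The base case is immediate. For the inductive step, pick a branch $\pi$ currently satisfied by $\M$; if the rule being applied is not applied to $\pi$, the branch is unchanged. If it is, the preceding lemma produces at least one extension $\pi'$ of $\pi$ all of whose formulas remain satisfied by $\M$, and I single that extension out. The new rules specific to $\JL^\mathcal{T}_\CS$ fit into this pattern without incident: for $(PB)$, exactly one of $A$ and $\neg A$ holds in $\M$, which selects one of the two branches; for $(T\cdot)$, the hypotheses $\M \Vdash s{:}(A\to B)$ and $\M \Vdash t{:}A$ yield $A\to B \in \E(s)$ and $A \in \E(t)$, and then $\E 1$ gives $B \in \E(s\cdot t)$, so $\M \Vdash s\cdot t{:}B$ (and in logics containing jT the additional truth requirement is similarly inherited from $\M\Vdash A\to B$ and $\M\Vdash A$ by modus ponens). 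The subformula restrictions on $(T\cdot)$ and $(PB)$ play no semantic role and can simply be ignored here.

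To finish, I would observe that a branch satisfied by $\M$ can never be closed under any of the three closure conditions: $\M$ does not satisfy both a formula and its negation; $\M \not\Vdash \bot$ by the first clause of the forcing definition; and since $\M$ respects $\CS$, we have $\M \Vdash c{:}F$, hence $\M \not\Vdash \neg c{:}F$, for every $c{:}F \in \CS$. Consequently no $\JL^\mathcal{T}_\CS$-tableau beginning with $\neg A$ can close, so $A$ has no $\JL^\mathcal{T}_\CS$-tableau proof.

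There is no genuine obstacle here: the argument is a direct transcription of the earlier soundness proof, with the stated lemma absorbing all the rule-by-rule verifications. The only point one must be careful about is ensuring that the lemma genuinely covers $(T\cdot)$ and $(PB)$, and that the analyticity constraints on those rules are semantic window dressing that do not affect soundness; both checks are immediate from the definitions.
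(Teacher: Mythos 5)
Your proposal is correct and follows exactly the route the paper intends: the paper states the branch-preservation lemma for $\JL^\mathcal{T}_\CS$-tableaux and takes the soundness theorem as its immediate consequence, just as in the earlier $\JL_\CS$-tableau case, and your contrapositive argument (induction on rule applications, the semantic check of $(T\cdot)$ and $(PB)$, and the observation that a satisfied branch meets none of the three closure conditions since $\M$ respects $\CS$) fills in precisely the details the paper leaves implicit. No discrepancy with the paper's approach.
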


In order to prove completeness we use the cut rule

\[
\AXC{}\RightLabel{$(cut)$}
\UIC{$A~|~\neg A$}
\DisplayProof
\]

 The cut rule is the same as the principle of bivalence $(PB)$ but without any restrictions on the cut-formula $A$. Completeness is proved by first showing that all theorems of $\JL_\CS$ are provable in the tableau system $\JL_\CS^\mathcal{T} + (cut)$, and then by proving the cut elimination theorem for $\JL_\CS^\mathcal{T} + (cut)$.

\begin{theorem}[Completeness]\label{thm:completeness JL^T+cut}
If $A$ is provable in $\JL_\CS$, then it is provable in the tableau system $\JL_\CS^\mathcal{T} + (cut)$.
\end{theorem}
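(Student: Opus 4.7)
The plan is to induct on the length of a Hilbert-style derivation of $A$ in $\JL_\CS$, splitting into base cases (the axiom schemes of $\JL_\CS$) and inductive cases (Modus Ponens and IAN).

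For the axiom schemes I would give a direct $\JL_\CS^\mathcal{T}+(cut)$-tableau derivation of each. Propositional tautologies are handled by iterating unrestricted $(cut)$ on all propositional atoms occurring in $A$, and then closing every resulting branch with $(F\neg)$, $(F\rightarrow)$, $(T\rightarrow)$ in the usual truth-table way. Each justification axiom admits a short derivation driven by its associated tableau rule: the negation of the axiom is unfolded by $(F\rightarrow)$, then Sum is closed by $(F+_L)$ or $(F+_R)$; jK by $(T\cdot)$; jT by $(T:)$; jD by $(T:_\bot)$; j4 by $(F!)$; jB by $(F\bar{?})$; j5 by $(F?)$. In the jK case the three formulas participating in $(T\cdot)$ are visibly $\JL_\CS$-subformulas of the axiom being proved, so the restriction on $(T\cdot)$ is satisfied by inspection; the other axioms use no $(T\cdot)$. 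For the IAN rule I invoke the admissible rule displayed immediately after Definition~\ref{def:subformula}: any formula $c_{i_n}:c_{i_{n-1}}:\ldots:c_{i_1}:A\in\CS$ can be written onto any branch via $(PB)$ with that formula as the $PB$-formula (permitted by clause~6 of Definition~\ref{def:subformula}), because the right subbranch closes immediately under the third closure condition.

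For Modus Ponens, suppose inductively that we have closed tableaux $\tau_A$ rooted at $\neg A$ and $\tau_{A\rightarrow B}$ rooted at $\neg(A\rightarrow B)$. To construct a closed tableau rooted at $\neg B$, I apply $(cut)$ on $A$: on the $\neg A$ subbranch I splice $\tau_A$, closing it; on the $A$ subbranch I apply $(cut)$ on $A\rightarrow B$, splicing $\tau_{A\rightarrow B}$ under $\neg(A\rightarrow B)$ and, under $A\rightarrow B$, applying $(T\rightarrow)$ to produce the subbranches $\neg A$ (closed against the $A$ above) and $B$ (closed against the root $\neg B$). Unrestricted $(cut)$ is essential here, since neither $A$ nor $A\rightarrow B$ need be a $\JL_\CS$-subformula of $B$, so restricted $(PB)$ would not suffice.

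The main obstacle is the splicing step: internally to $\tau_A$ the restricted rules $(T\cdot)$ and $(PB)$ were licensed with respect to $\tau_A$'s own root $\neg A$, whereas in the enlarged tableau the root is $\neg B$, and a $\JL_\CS$-subformula of $\neg A$ need not be a $\JL_\CS$-subformula of $\neg B$. I would handle this by re-reading each restricted $(PB)$ occurring in the spliced portion as an unrestricted $(cut)$ (already a rule of the system) and by strengthening the induction hypothesis so that the tableau it supplies uses only unrestricted $(cut)$ in place of the restricted rules; equivalently one can reformulate the induction as the construction of a single tableau rooted at $\neg A_n$ simulating the given Hilbert derivation $A_1,\ldots,A_n=A$ step by step via successive unrestricted cuts on $A_1,\ldots,A_{n-1}$, closing each ``wrong'' side using the inductive tableau proof of the corresponding $A_i$. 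This bookkeeping is the only nontrivial part, and the subformula discipline will be re-established afterwards by the cut-elimination theorem for $\JL_\CS^\mathcal{T}+(cut)$.
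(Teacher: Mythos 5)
Your core construction is exactly the paper's: induction on the $\JL_\CS$-derivation, direct closed tableaux for each axiom scheme (with $(T\cdot)$ needed only for jK, where the root-subformula restriction is indeed satisfied), the third closure condition for IAN, and the same two-cut tableau for Modus Ponens; up to this point the proposal and the published proof coincide, except that the paper observes the axioms can even be done without any cut, whereas you use cuts on atoms for the tautologies (harmless, just unnecessary).

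Where you go beyond the paper --- the splicing worry about the root-relative restrictions --- your remedy does not work as stated. Re-reading a restricted $(PB)$ inside a spliced subtableau as an unrestricted $(cut)$ is fine, but the restriction on $(T\cdot)$ cannot be discharged the same way: $(T\cdot)$ is not simulable by $(cut)$ in $\JL^\mathcal{T}_\CS+(cut)$, because a branch containing $s{:}(A\r B)$, $t{:}A$ and $\neg s\cdot t{:}B$ has no other rule available to close it (there is no $(F\cdot)$ in this system), so a jK instance can only be proved via $(T\cdot)$, and after splicing under the new root its three formulas need not be $\JL_\CS$-subformulas of that root. Hence ``strengthening the induction hypothesis so that only unrestricted $(cut)$ is used in place of the restricted rules'' is impossible for $(T\cdot)$, and deferring the problem to cut elimination is circular: the paper's cut-elimination argument (Case III, the $(T\cdot)$ case) explicitly uses the fact that the premises of every $(T\cdot)$ in the given tableau are $\JL_\CS$-subformulas of the root, so it only applies to tableaux that already respect that restriction. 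To be fair, the paper itself splices $T_1$ and $T_2$ under the new root without comment, so the issue you identify is not addressed there either; but as a repair, your proposal leaves the $(T\cdot)$ case open, and that is the one genuine gap in an otherwise faithful reconstruction of the paper's argument.
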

\begin{proof}
The proof is by induction on the proof of $A$ in $\JL_\CS$. It is a routine matter to check that all axioms of $\JL$ are provable in $\JL_\CS^\mathcal{T}$, even without using $(PB)$ and $(cut)$. If $A$ is obtained from $B$ and $B\r A$ by MP, then by the induction hypothesis there are closed $\JL_\CS^\mathcal{T}$-tableaux $T_1$ and $T_2$ for $B$ and $B\r A$ respectively. Then, using the cut rule twice, the following is a closed tableau for $A$

\vspace*{0.2cm}
\Tree[.$\neg A$  [.$B$ [.$B\r A$ {$\neg B$ \\ $\otimes$} {$A$ \\ $\otimes$} !\qsetw{2cm} ] \qroof{$T_2$}.$\neg (B\r A)$ !\qsetw{3cm} ] \qroof{$T_1$}.$\neg B$ !\qsetw{2.5cm} ]
\vspace*{0.2cm}

Finally, if $A=c:F\in\CS$ is obtained by IAN, then by the closure condition $\neg c:F$ is a closed one-node tableau. \qed
\end{proof}

The proof of the cut elimination is similar to the algorithm given by Fitting in \cite{Fitting1996}, and thus the details will be omitted. The following definitions are inspired from those in \cite{Fitting1996}.

\begin{definition}
The rank of a term $t$ and a formula $A$, denoted by $r(t)$ and $r(A)$ respectively, is defined inductively as follows:
\begin{enumerate}
\item $r(x)=r(c)=0$, for justification variable $x$ and justification constant $c$,\\
 $r(s+t) = r(s\cdot t)= r(s) + r(t) +1$, $r(!t)=r(\bar{?}t)=r(?t) = r(t) +1$.

\item $r(p)=r(\bot)=0$, for $p\in\mathcal{P}$, \\ $r(\neg A)= r(A) +1$, $r(A \r B) = r(A) + r(B) +1$, $r(t:A) = r(t) + r(A) +1$.
\end{enumerate}
\end{definition}

\begin{definition}
Suppose that in a tableau $T$ there is a cut to $A$ and $\neg A$ of the following form:

\vspace*{0.2cm}
\Tree [  \qroof{$T_1$}.$A$ \qroof{$T_2$}.$\neg A$ !{\qbalance} ]
\vspace*{0.2cm}

where $T_1$ and $T_2$ are the subtableaux below $A$ and $\neg A$, respectively. Let $|T|$ denote the number of formulas in the tableau $T$.
\begin{enumerate}
\item We say the cut is at a branch end if $|T_1|=0$ or $|T_2|=0$; that is, if either there are no formulas below $A$, or there are no formulas below $\neg A$, or both.

\item The rank of the cut is the rank of the cut-formula $A$.

\item The weight of the cut is the number of formulas in $T$ strictly below $A$ and $\neg A$; that is, the weight of the cut is $|T_1| + |T_2|$.

\item The cut is called minimal if there are no cuts in the  subtableaux $T_1$ and $T_2$.
\end{enumerate}

\end{definition}

The following fact will be used frequently in the proof of cut elimination (cf. \cite{Fitting1996}). Suppose that $T$ is a closed tableau for a finite set $S$ of formulas and $S \subseteq S'$, where $S'$ is also finite. Then there is a closed tableau for $S'$ with the same number of steps.

\begin{theorem}[Cut Elimination]\label{thm:Cut Elimination}
If a formula is provable in the tableau system $\JL_\CS^\mathcal{T}+ (cut)$, then it is also provable in $\JL_\CS^\mathcal{T}$.
\end{theorem}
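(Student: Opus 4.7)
The plan is to follow the standard tableau cut-elimination algorithm of Fitting, adapting the propositional reductions to also handle the justification rules. I would proceed by well-founded induction on the lexicographic pair (rank of the cut-formula, weight of the cut), applied to a \emph{minimal} cut — one whose immediate subtableaux $T_1$ and $T_2$ (below $A$ and $\neg A$ respectively) contain no further applications of $(cut)$. A minimal cut always exists if any cut is present (take a bottom-most one), so by repeatedly eliminating minimal cuts we eventually obtain a cut-free tableau.

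First I would dispose of the easy case where the cut is at a branch end, $|T_1|=0$ or $|T_2|=0$. Here the branch through $A$ (say) closes immediately using $A$ itself together with formulas already on the path above the cut. Examining each closure sub-case — matching $\neg A$ above, $A$ being $\bot$, or $A = \neg c:F$ for some $c:F \in \CS$ — one checks that the cut can be excised and replaced by $T_2$ alone without losing closedness, using the fact noted just before the theorem that a closed tableau for $S$ extends to one for any finite $S' \supseteq S$ with the same number of steps. A second easy case is when the first rule applied below the cut on either branch does not mention the cut-formula; then that rule can be permuted above the cut, strictly decreasing the weight while leaving the rank unchanged, and the induction hypothesis on weight finishes.

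The core work is when the topmost rule below the cut consumes the cut-formula as a premise. For propositional cut-formulas these are the classical Gentzen-style moves: a cut on $\neg B$ reduces to a cut on $B$ after $(F\neg)$; a cut on $B \r C$ reduces to nested cuts on $B$ and $C$ obtained by combining the branchings created by $(T\r)$ on one side and $(F\r)$ on the other. For a cut on a justification formula $t:F$ the reduction depends on which rule consumes $t:F$ or $\neg t:F$. The positive side can be consumed by $(T\cdot)$, $(T:)$ (in jT-systems) or $(T:_\bot)$ (in jD-systems); the negative side can be consumed by $(F+_L),(F+_R),(F!),(F\bar{?})$ or $(F?)$. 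In every case the conclusion of the consuming rule strips at least one term constructor or one occurrence of ``$:$'', and hence has rank strictly less than $r(t:F)$. We therefore replace the original cut by a cut on this smaller formula, grafting appropriate fragments of $T_1$ and $T_2$ into the new branches.

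The main obstacle I expect is the justification case for the binary rule $(T\cdot)$: a cut on $t:F$ whose positive occurrence is one of the two premises of a $(T\cdot)$ application in $T_1$ must be resolved while preserving the companion premise on the correct branch of the restructured tableau. One has to show either that the companion premise is already available above the cut, or that its derivation inside $T_1$ can be permuted upward before the rank-reducing substitution is performed. The bookkeeping is essentially the same as in Fitting~\cite{Fitting1996}, but must be carried out uniformly for every combination of the axioms jT, jD, j4, jB, j5, checking that no interaction between the introduced justification rules blocks the rank-reducing substitutions.
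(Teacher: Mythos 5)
There is a genuine gap: your plan never uses the fact that the target system $\JL_\CS^\mathcal{T}$ still contains the restricted cut rule $(PB)$, and the paper's argument depends on this at exactly the two places where your reductions break down. First, in the branch-end case where the branch closes because of $\neg c{:}F$ with $c{:}F\in\CS$, you propose to ``excise the cut and replace it by the other subtableau''; this is unsound, because the surviving subtableau sits below the cut formula ($c{:}F$, or $\neg\neg c{:}F$) and may use it, e.g.\ as a premise of $(T\cdot)$, and the monotonicity fact you invoke only allows adding formulas to the initial set, not deleting the one the subtableau needs. The paper instead observes that $c{:}F\in\CS$ is a $\JL_\CS$-subformula of the root (clause 6 of Definition \ref{def:subformula}), so this cut already satisfies the restriction on $(PB)$ and is simply kept as a legal $(PB)$ application; nothing is eliminated there.

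Second, your treatment of the case where the cut formula is consumed by $(T\cdot)$ is both incorrect in detail and misses the intended idea. Your claim that ``the conclusion of the consuming rule strips at least one term constructor and hence has rank strictly less than $r(t{:}F)$'' fails for $(T\cdot)$: its conclusion $s\cdot t{:}B$ is built up from both premises and in general has rank larger than the premise serving as cut formula, so no rank-reducing substitution of the kind you describe is available, and the proposed permutation of the companion premise gives no decrease in the induction measure. The paper's resolution is again the analyticity observation: by the side condition on $(T\cdot)$, any premise $s{:}(A\r B)$ or $t{:}A$ of an application of $(T\cdot)$ is a $\JL_\CS$-subformula of the root, so a cut on such a formula is already an instance of $(PB)$ and requires no reduction at all; the genuine rank/weight reductions are only needed for the remaining combinations (e.g.\ $(F+)$, $(F!)$, $(F\bar{?})$, $(F?)$, $(T{:})$, $(T{:}_\bot)$ against $(F\neg)$ or each other), which your outline does cover. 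Without recognizing that the procedure eliminates only non-analytic cuts, turning the unavoidable residual cuts into $(PB)$ instances, the elimination cannot be completed along the lines you sketch.
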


\begin{proof}
We will show how to eliminate the minimal cuts from a tableau $T$. Suppose $T$ consists a minimal cut of the following form:

\vspace*{0.2cm}
\Tree [.$\Theta$   \qroof{$T_1$}.$A$ $(cut)$ \qroof{$T_2$}.$\neg A$ !\qsetw{1cm}  !{\brOverride} ]
\vspace*{0.2cm}

 The proof is by induction on the rank of the cut-formula $A$ with subinduction on the weight of the cut. Similar to the cut elimination of the sequent calculus of classical logic (cf. \cite{TS}), we distinguish three cases:

\begin{description}
\item[Case I.] The minimal cut is at a branch end.

\item[Case II.] The minimal cut is not at a branch end, and the uppermost formulas in $T_1$ or $T_2$ are obtained by applying a tableau rule to a formula from $\Theta$.

\item[Case III.] The minimal cut is not at a branch end, and the uppermost formulas in $T_1$ and $T_2$ are obtained by applying  tableau rules to $A$ and $\neg A$, respectively.
\end{description}

In case I, we eliminate the minimal cut. In  cases II and III, we transform the tableau $T$ into another closed tableau in which the minimal cut is replaced by cuts of lower rank, by cuts of the same rank but of lower weight, or both.

{\bf Case I.}
Suppose we have a minimal cut at the end of a branch. We only consider the case in which the branch closes because of $\neg c:F$, where $c:F \in \CS$ (see \cite{Fitting1996} for the other cases). In this case the cut looks like this.

\vspace*{0.2cm}
\Tree [.$\Theta$   \qroof{$T'$}.$c:F$  {$\neg c:F$ \\ $\otimes$} !{\qbalance} ]
\vspace*{0.2cm}

Since $c:F \in \CS$, the cut-formula $c:F$ is a $\JL_\CS$-subformula of the root, and hence the cut is  an instance of $(PB)$.

{\bf Case II.}
Suppose the minimal cut is not at a branch end, and the uppermost formulas in $T_1$ or $T_2$ are obtained by applying a tableau rule to formulas from $\Theta$. In this case we push the cut down in the tableau and obtain a new cut of lower weight. We only consider two cases: (i) the rule $(T\cdot)$ is applied to formulas from $\Theta$, and (ii) the rule $(PB)$ is applied. The other cases are similar.

Suppose the rule $(T\cdot)$ is applied to formulas from $\Theta$. Then the cut is of the form shown in (1), where $s:(A \r B)$, $t:A$, and $s \cdot t:B$ are $\JL_\CS$-subformulas of the root. The displayed cut in (1) is transformed into the one in (2) of lower weight.

(1)
\vspace*{0.2cm}
\Tree [.{$\vdots$ \\ $s:(A \r B)$ \\ $t:A$ \\ $\vdots$}   \qroof{$T_1$}.{$C$ \\ $s \cdot t:B$} \qroof{$T_2$}.$\neg C$ !{\qbalance} ]
\hskip 1.5cm
(2)
\Tree [.{$\vdots$ \\ $s:(A \r B)$ \\ $t:A$ \\ $\vdots$ \\$s \cdot t:B$}   \qroof{$T_1$}.$C$ \qroof{$T_2$}.$\neg C$ !{\qbalance} ]
\vspace*{0.1cm}

Now suppose the rule $(PB)$ is applied. Then the cut is of the form shown in (3), where $A$ is a $\JL_\CS$-subformula of the root. The displayed cut in (3) is transformed into the one in (4) of lower weight.

\vspace*{0.2cm}
(3)
\renewcommand{\qroofpadding}{0.6em}
\Tree [.$\Theta$
 [.$C$ \qroof{$T_1^L$}.$A$ !\qsetw{1cm} $(PB)$ \qroof{$T_1^R$}.$\neg A$ !{\brOverride} ].$C$ !{\brRestore} !\qsetw{4cm}
 \qroof{$T_2$}.$\neg C$ ] 
\hskip 1.5cm
(4)
\Tree [.$\Theta$
 [.$A$  \qroof{$T_1^L$}.$C$ !\qsetw{1.5cm} \qroof{$T_2$}.$\neg C$ ] 
 $(PB)$
 [.$\neg A$  \qroof{$T_1^R$}.$C$  !\qsetw{1.5cm} \qroof{$T_2$}.$\neg C$ ] !{\brOverride} ]
\vspace*{0.1cm}

{\bf Case III.}
Suppose the minimal cut is not at a branch end, and the uppermost formulas in $T_1$ and $T_2$ are obtained by applying  tableau rules to $A$ and $\neg A$, respectively. In this case we transform the cut into cuts of lower rank, or into cuts with the same rank but of lower weight. 

First consider the rule $(T\cdot)$ which is a two-premised rule of the form
\AXC{$\varphi_1$}\noLine
\UIC{$\varphi_2$}
\UIC{$\varphi$}
\DP.
Since $\varphi_1$ is a $\JL_\CS$-subformula of the root, the two cuts to $\varphi_1$ and $\neg\varphi_1$ shown in (5) and (6) are instances of $(PB)$. The same holds if in (5) or (6) a cut  to $\varphi_2$ and $\neg\varphi_2$ is applied.

\vspace*{0.2cm}
(5)
\Tree [.$\Theta$   [.{$\varphi_1$\\ $\varphi_2$} \qroof{$T_1$}.$\varphi$ ] \qroof{$T_2$}.$\neg \varphi_1$ !{\qbalance} ]
\hskip 1.5cm
(6)
\Tree [.{$\Theta$ \\ $\varphi_2$} [.$\varphi_1$ \qroof{$T_1$}.$\varphi$ ] \qroof{$T_2$}.$\neg \varphi_1$ !{\qbalance} ]
\vspace*{0.2cm}

For example, the following cuts are  instances of $(PB)$.

\vspace*{0.2cm}
\Tree [.{$\Theta$ \\ $s':(A \r B)$}
 [.$t+s:A$ \qroof{$T_1$}.$s'\cdot (t+s):B$ ]
 $(PB)$
 [.$\neg t+s:A$ \qroof{$T_2$}.$\neg t:A$ ] !{\qbalance}  !{\brOverride} ].{$\Theta$ \\ $s':(A \r B)$}
 \hskip 1.2cm
 \Tree [.{$\Theta$ \\ $s':A$}
 [.$t+s:(A\r B)$ \qroof{$T_1$}.$(t+s)\cdot s':B$ ]
 $(PB)$
 [.$\neg t+s:(A\r B)$ \qroof{$T_2$}.$\neg t:(A\r B)$  ] !\qsetw{3cm}  !{\brOverride} ]
\vspace*{0.2cm}

\vspace*{0.2cm}
\Tree [.{$\Theta$ \\ $s:(t:A \r B)$}
 [.$!t:t:A$ \qroof{$T_1$}.$s\cdot !t:B$ ]
 $(PB)$
 [.$\neg !t:t:A$ \qroof{$T_2$}.$\neg t:A$ ] !{\qbalance}  !{\brOverride} ].{$\Theta$ \\ $s:(t:A \r B)$}
 \hskip 1.2cm
 \Tree [.{$\Theta$ \\ $s:(\neg t:A \r B)$}
 [.$\bar{?}t:\neg t:A$ \qroof{$T_1$}.$s\cdot \bar{?}t:B$ ]
 $(PB)$
 [.$\neg \bar{?}t:\neg t:A$ \qroof{$T_2$}.$A$ ] !{\qbalance}  !{\brOverride} ].{$\Theta$ \\ $s:(\neg t:A \r B)$}
\vspace*{0.2cm}

\vspace*{0.2cm}
\Tree [.{$\Theta$ \\ $s:(\neg t:A \r B)$}
 [.$\bar{?}t:\neg t:A$ \qroof{$T_1$}.$s\cdot \bar{?}t:B$ ]
 $(PB)$
 [.$\neg \bar{?}t:\neg t:A$ \qroof{$T_2$}.$A$ ] !{\qbalance}  !{\brOverride} ].{$\Theta$ \\ $s:(\neg t:A \r B)$}
\vspace*{0.2cm}

\newpage

Consider the following cut to formulas $\neg t+s :A$ and $\neg\neg t+s :A$ to which the rules $(F+_L)$ and $(F\neg)$ are applied respectively.

\vspace*{0.2cm}
\Tree [.$\Theta$
 [.$\neg t+s:A$ \qroof{$T_1$}.$\neg t:A$ ]
 $(cut)$
 [.$\neg\neg t+s:A$ \qroof{$T_2$}.$t+s:A$ ] !{\qbalance}  !{\brOverride} ].$\Theta$
\vspace*{0.2cm}

This cut is transformed into the following cuts.

\vspace*{0.2cm}
\Tree [.$\Theta$
  [.$t:A$ [.$t+s:A$ {$\neg t+s:A$  \\ $\otimes$} $(cut)_4$ !\qsetw{1.2cm} \qroof{$T_2$}.$\neg\neg t+s:A$ !{\brOverride} ].$t+s:A$ !{\brRestore} !\qsetw{2.5cm}
   $(cut)_2$
     [.$\neg t+s:A$ {$\neg t:A$ \\ $\otimes$} ] !{\brOverride} ].$t:A$ !\qsetw{0.7cm}
    $(cut)_1$ !\qsetw{0.7cm}
   [.$\neg t:A$  \qroof{$T_1$}.$\neg t+s:A$ !{\brRestore}  !\qsetw{2.5cm}
    $(cut)_3$
     [.$\neg\neg t+s:A$ \qroof{$T_2$}.$t+s:A$ ] !{\brOverride} ].$\neg t:A$ !{\brOverride}
     ].$\Theta$
\vspace*{0.2cm}

The rank of $(cut)_1$ and $(cut)_2$ is less than the rank of $(cut)$. Moreover, $(cut)_3$ and $(cut)_4$ have the same rank as $(cut)$ but their weight are smaller than the weight of $(cut)$. The case of $(F+_R)$ is treated in a similar way.

Consider the following cut to formulas $\neg !t:t:A$ and $\neg\neg !t:t:A$ to which the rules $(F!)$ and $(F\neg)$ are applied respectively.

\vspace*{0.2cm}
\Tree [.$\Theta$
 [.$\neg !t:t:A$ \qroof{$T_1$}.$\neg t:A$ ]
 $(cut)$
 [.$\neg\neg !t:t:A$ \qroof{$T_2$}.$!t:t:A$ ] !{\qbalance}  !{\brOverride} ].$\Theta$
\vspace*{0.2cm}

This cut is transformed into the following cuts.


\vspace*{0.2cm}
\Tree [.$\Theta$
  [.$t:A$ 
  [.$!t:t:A$ {$\neg !t:t:A$ \\  $\otimes$}  !\qsetw{2cm}
   $(cut)_4$
    \qroof{$T_2$}.$\neg\neg !t:t:A$ !{\brOverride} ].$!t:t:A$ !{\brRestore} 
 !\qsetw{3cm}   $(cut)_2$ !\qsetw{0.5cm}
     [.$\neg !t:t:A$ {$\neg t:A$ \\  $\otimes$} ] !{\brOverride} ].$t:A$ !\qsetw{3cm}
    $(cut)_1$ !\qsetw{1cm}
   [.$\neg t:A$  \qroof{$T_1$}.$\neg !t:t:A$ !{\brRestore}  !\qsetw{2.5cm}
    $(cut)_3$
     [.$\neg\neg !t:t:A$ \qroof{$T_2$}.$!t:t:A$ ] !{\brOverride} ].$\neg t:A$ !{\brOverride}
     ].$\Theta$
\vspace*{0.2cm}

The rank of $(cut)_1$ and $(cut)_2$ is less than the rank of $(cut)$. Moreover, $(cut)_3$ and $(cut)_4$ have the same rank as $(cut)$ but their weight are smaller than the weight of $(cut)$. The  cut to formulas $\neg ?t:\neg t:A$ and $\neg\neg ?t:\neg t:A$ to which the rules $(F?)$ and $(F\neg)$ are applied respectively is treated similarly.

Consider the following cut to formulas $\neg \bar{?}t:\neg t:A$ and $\neg\neg \bar{?}t:\neg t:A$ to which the rules  $(F\bar{?})$ and $(F\neg)$ are applied respectively.

\vspace*{0.2cm}
\Tree [.$\Theta$
 [.$\neg\bar{?}t:\neg t:A$ \qroof{$T_1$}.$A$ ]
 $(cut)$
 [.$\neg\neg \bar{?}t:\neg t:A$ \qroof{$T_2$}.$\bar{?}t:\neg t:A$ ] !{\qbalance}  !{\brOverride} ].$\Theta$
\vspace*{0.2cm}

This cut is transformed into the following cuts.


\vspace*{0.2cm}
\Tree [.$\Theta$
  [.$A$  
  \qroof{$T_1$}.$\neg \bar{?}t:\neg t:A$ !\qsetw{2.5cm}
   $(cut)_2$
    [.$\neg\neg \bar{?}t:\neg t:A$ \qroof{$T_2$}.$\bar{?}t:\neg t:A$ ] !{\brOverride} ].$A$  
   $(cut)_1$ !\qsetw{0.01cm}
   [.$\neg A$  
   [.$\bar{?}t:\neg t:A$  {$\neg \bar{?}t:\neg t:A$ \\ $\otimes$  } !\qsetw{2.5cm}
    $(cut)_4$
     \qroof{$T_2$}.$\neg\neg \bar{?}t:\neg t:A$ !{\brOverride} ].$\bar{?}t:\neg t:A$ !{\brRestore}
   !\qsetw{1.5cm}   $(cut)_3$ !\qsetw{1.5cm}
     [.$\neg \bar{?}t:\neg t:A$ {$A$ \\ $\otimes$ }  ]  !{\brOverride} ].$\neg A$  !{\brOverride} ].$\Theta$ 
\vspace*{0.2cm}

The rank of $(cut)_1$ and $(cut)_3$ is less than the rank of $(cut)$. Moreover, $(cut)_2$ and $(cut)_4$ have the same rank as $(cut)$ but their weight are smaller than the weight of $(cut)$.

Now suppose that jT is an axiom of $\JL$. Consider the following cut to formulas $t+s :A$ and $\neg t+s :A$ to which the rules $(T:)$ and $(F+_L)$ are applied respectively.

\vspace*{0.2cm}
\Tree [.$\Theta$
 [.$t+s:A$ \qroof{$T_1$}.$A$ ]
 $(cut)$
 [.$\neg t+s:A$ \qroof{$T_2$}.$\neg t:A$ ] !{\qbalance}  !{\brOverride} ].$\Theta$
\vspace*{0.2cm}

This cut is transformed into the following cuts.


\vspace*{0.2cm}
\Tree [.$\Theta$
  [.$t:A$  [.$A$ \qroof{$T_1$}.$t+s:A$ $(cut)_4$  [.$\neg t+s:A$ {$\neg t:A$ \\  $\otimes$}   ]  !{\brOverride} ].$A$  !{\brRestore} !\qsetw{2.5cm}
   $(cut)_2$
    [.$\neg A$ {$A$ \\ $\otimes$} ] !{\brOverride} ].$t:A$ !\qsetw{4.5cm}
    $(cut)_1$
   [.$\neg t:A$  [.$t+s:A$ \qroof{$T_1$}.$A$ !{\brRestore} ].$t+s:A$  !\qsetw{2.5cm}
    $(cut)_3$
     \qroof{$T_2$}.$\neg t+s:A$ !{\brOverride} ].$\neg t:A$ !{\brOverride} ].$\Theta$
\vspace*{0.2cm}

The rank of $(cut)_1$ and $(cut)_2$ is less than the rank of $(cut)$. Moreover, $(cut)_3$ and $(cut)_4$ have the same rank as $(cut)$ but their weight are smaller than the weight of $(cut)$. The case of $(F+_R)$ is similar.

Consider the following cut to formulas $!t:t:A$ and $\neg !t:t:A$ to which the rules $(T:)$ and $(F!)$ are applied respectively.

\vspace*{0.2cm}
\Tree [.$\Theta$
 [.$!t:t:A$ \qroof{$T_1$}.$t:A$ ]
 $(cut)$
 [.$\neg !t:t:A$ \qroof{$T_2$}.$\neg t:A$ ] !{\qbalance}  !{\brOverride} ].$\Theta$
\vspace*{0.2cm}

This cut is transformed into the following cuts.

\vspace*{0.2cm}
\Tree [.$\Theta$
  [.$t:A$  \qroof{$T_1$}.$!t:t:A$  !\qsetw{2.5cm}
   $(cut)_2$
    [.$\neg !t:t:A$ {$\neg t:A$ \\ $\otimes$} ] !{\brOverride} ].$t:A$ !\qsetw{4.5cm}
    $(cut)_1$
   [.$\neg t:A$  [.$!t:t:A$ {$ t:A$ \\ $\otimes$} !{\brRestore} ].$!t:t:A$  !\qsetw{2.5cm}
    $(cut)_3$
     \qroof{$T_2$}.$\neg !t:t:A$ !{\brOverride} ].$\neg t:A$ !{\brOverride} ].$\Theta$
\vspace*{0.2cm}

The rank of $(cut)_1$ is less than the rank of $(cut)$. Moreover, $(cut)_2$ and $(cut)_3$ have the same rank as $(cut)$ but their weight are smaller than the weight of $(cut)$.
The  cut to formulas $?t:\neg t:A$ and $\neg ?t:\neg t:A$ to which the rules $(T:)$ and $(F?)$ are applied respectively is treated similarly.

Consider the following cut to formulas $\bar{?}t:\neg t:A$ and $\neg \bar{?}t:\neg t:A$ to which the rules $(T:)$ and $(F\bar{?})$ are applied respectively.

\vspace*{0.2cm}
\Tree [.$\Theta$
 [.$\bar{?}t:\neg t:A$ \qroof{$T_1$}.$\neg t:A$ ]
 $(cut)$
 [.$\neg \bar{?}t:\neg t:A$ \qroof{$T_2$}.$A$ ] !{\qbalance}  !{\brOverride} ].$\Theta$
\vspace*{0.2cm}

This cut is transformed into the following cuts.

\vspace*{0.2cm}
\Tree [.$\Theta$
  [.$t:A$  [.$A$  [.$\bar{?}t:\neg t:A$ {$\neg t:A$ \\ $\otimes$} ] !\qsetw{1.8cm} 
$(cut)_4$ !\qsetw{1.2cm} 
\qroof{$T_2$}.$\neg \bar{?}t:\neg t:A$ !{\brOverride} ].$A$ !{\brRestore} !\qsetw{3.1cm} 
   $(cut)_2$ !\qsetw{0.1cm} 
   [.$\neg A$ {$A$ \\ $\otimes$} ] !{\brOverride} ].$t:A$ !\qsetw{0.3cm}
    $(cut)_1$  !\qsetw{0.3cm}
   [.$\neg t:A$   \qroof{$T_1$}.$\bar{?}t:\neg t:A$  !{\brRestore}   !\qsetw{2.5cm}
    $(cut)_3$
     [.$\neg \bar{?}t:\neg t:A$ \qroof{$T_2$}.$A$ ] !{\brOverride} ].$\neg t:A$ !{\brOverride} ].$\Theta$
\vspace*{0.2cm}

The rank of $(cut)_1$ and $(cut)_2$ is less than the rank of $(cut)$. Moreover, $(cut)_3$ and $(cut)_4$ have the same rank as $(cut)$ but their weight are smaller than the weight of $(cut)$.

Consider the cut  to formulas $t+s:\bot$ and $\neg t+s:\bot$, shown in (7), to which the rules $(T:_\bot)$ and $(F+_L)$ are applied respectively.

\vspace*{0.2cm}
(7)
\Tree [.$\Theta$   [.$t+s:\bot$  {$\bot$ \\ $\otimes$} ] [.$\neg t+s:\bot$  \qroof{$T'$}.$\neg t:\bot$ ] !{\qbalance} ]
\hskip 1.5cm
(8)
\Tree [.$\Theta$    [.$t:\bot$ {$\bot$ \\ $\otimes$} ]
[.$\neg t:\bot$ [.$t+s:\bot$ {$\bot$ \\ $\otimes$} ]  \qroof{$T'$}.$\neg t+s:\bot$   !\qsetw{2.5cm} ]  !\qsetw{4.5cm} ]
\vspace*{0.2cm}

 The cut in (7) is transformed into the cuts shown in (8), in which the cut to $t:\bot$ and $\neg t:\bot$ has a lower rank, and the weight of the cut  to $t+s:\bot$ and $\neg t+s:\bot$ is smaller than the weight of the original cut. The case of $(F+_R)$ is treated in a similar way.

Actually there are two remaining cuts to verify in this case: the cut to formulas $A \r B$ and $\neg (A \r B)$ to which the rules $(T\r)$ and $(F\r)$ are applied respectively; and the cut to formulas $\neg (A \r B)$ and $\neg\neg (A \r B)$ to which the rules $(F\r)$ and $(F\neg)$ are applied respectively. We refer the reader to \cite{Fitting1996}  for a more detailed exposition of these two cuts. \qed
\end{proof}

\begin{theorem}[Completeness]\label{thm:completeness KE-tableaux}
If $A$ is $\JL_\CS$-valid, then it has a $\JL^\mathcal{T}_\CS$-tableau proof.
\end{theorem}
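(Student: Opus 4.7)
The plan is to derive this completeness theorem as an immediate corollary of three results already in place, namely the semantic completeness of the Hilbert-style system $\JL_\CS$ (Theorem \ref{thm:Sound Compl JL}), the simulation of $\JL_\CS$ inside the cut-augmented tableau system $\JL_\CS^\mathcal{T} + (cut)$ (Theorem \ref{thm:completeness JL^T+cut}), and the cut elimination theorem (Theorem \ref{thm:Cut Elimination}). No new constructions or inductions are needed; the task is just to chain these implications correctly.

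Concretely, I would argue as follows. Suppose $A$ is $\JL_\CS$-valid. First, by the semantic completeness direction of Theorem \ref{thm:Sound Compl JL}, $A$ is provable in $\JL_\CS$. Second, by Theorem \ref{thm:completeness JL^T+cut}, $A$ has a closed tableau in $\JL_\CS^\mathcal{T} + (cut)$, i.e.\ a closed tableau beginning with $\neg A$ that may use unrestricted cuts. Third, by the Cut Elimination Theorem (Theorem \ref{thm:Cut Elimination}), from such a proof we can produce a closed tableau for $A$ in $\JL_\CS^\mathcal{T}$ itself, with no applications of $(cut)$ — only the restricted principle of bivalence $(PB)$ on $\JL_\CS$-subformulas of the root $\neg A$. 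This is by definition a $\JL_\CS^\mathcal{T}$-tableau proof of $A$.

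I do not anticipate any obstacle here: the real work has been done in Theorems \ref{thm:completeness JL^T+cut} and \ref{thm:Cut Elimination}, and the present statement just packages them together with the known Hilbert-style completeness of $\JL_\CS$. The only thing to be careful about is to state the chain in the right direction and to note that cut elimination preserves the conclusion (the root $\neg A$) so that the resulting analytic tableau is still a proof of $A$.
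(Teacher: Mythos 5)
Your proposal is correct and follows exactly the same route as the paper: Hilbert-style completeness (Theorem \ref{thm:Sound Compl JL}), then simulation in $\JL_\CS^\mathcal{T} + (cut)$ (Theorem \ref{thm:completeness JL^T+cut}), then cut elimination (Theorem \ref{thm:Cut Elimination}). Nothing further is needed.
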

\begin{proof}
If $A$ is $\JL_\CS$-valid, then by Theorem \ref{thm:Sound Compl JL} it is provable in $\JL_\CS$. Hence, by Theorem \ref{thm:completeness JL^T+cut}, it is provable in $\JL^\mathcal{T}_\CS + (cut)$. Then, by the cut elimination theorem, it is provable in $\JL^\mathcal{T}_\CS$. \qed
\end{proof}

Inspection of all $\JL^\mathcal{T}_\CS$-tableau rules in Tables \ref{table:J^T} and \ref{table:tableau rules JL}   shows that in a $\JL^\mathcal{T}_\CS$-tableau every expanded formula of a rule is a weak $\JL_\CS$-subformula of the root of the tableau.

\begin{theorem}[Subformula property]\label{thm:subformula property tableaux}
Every formula in a $\JL^\mathcal{T}_\CS$-tableau proof is a weak $\JL_\CS$-subformula of the root of the tableau.
\end{theorem}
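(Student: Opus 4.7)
The plan is to prove the theorem by induction on the number of rule applications in the $\JL^\mathcal{T}_\CS$-tableau. The base case consists of the tableau with only its root $\neg A$, which is trivially a weak $\JL_\CS$-subformula of itself by clauses 1 and 2 of Definition \ref{def:subformula}. For the induction step, I will assume that every formula currently in the tableau is a weak $\JL_\CS$-subformula of the root, and check, rule by rule, that any rule application extends the tableau with formulas that are again weak $\JL_\CS$-subformulas of the root. By transitivity (clause 7), this reduces to the local claim that each output of a rule is a weak $\JL_\CS$-subformula of its input(s).

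For the propositional rules $(F\neg)$, $(F\r)$, and $(T\r)$, the conclusions are ordinary subformulas of the premise (up to negation) by clauses 2 and 3, so they are weak subformulas. For $(F+_L)$ and $(F+_R)$, since $s$ and $t$ are subterms of $s+t$, clause 5 yields that $s:A$ and $t:A$ are $\JL_\CS$-subformulas of $s+t:A$; applying clause 2 to the premise $\neg t+s:A$ makes $\neg t:A$ and $\neg s:A$ weak subformulas of the premise. The rules $(T:)$ and $(T:_\bot)$ produce $A$ (respectively $\bot$) from $t:A$ (respectively $t:\bot$), which is a subformula by clause 4. The rule $(F!)$ takes $\neg !t:t:A$ to $\neg t:A$; two applications of clause 4 inside a single application of clause 2 show that $\neg t:A$ is a weak subformula of $\neg !t:t:A$. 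The same pattern handles $(F?)$ (producing $t:A$ from $\neg ?t:\neg t:A$ via clauses 2, 4, 2) and $(F\bar{?})$ (producing $A$ from $\neg \bar{?}t:\neg t:A$ via clauses 2, 4, 2, 4).

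The only rules that are not locally bounded by their premises are $(T\cdot)$ and $(PB)$. However, their side-conditions in Table \ref{table:J^T} explicitly demand that the conclusion $s\cdot t:B$ of $(T\cdot)$ (along with its premises $s:(A\r B)$ and $t:A$) and the $PB$-formula $A$ of $(PB)$ be $\JL_\CS$-subformulas of the root; hence their outputs are weak $\JL_\CS$-subformulas of the root by fiat. Thus every rule preserves the inductive invariant.

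No step of this argument poses a serious obstacle: the theorem is essentially a verification that Definition \ref{def:subformula} was formulated so as to absorb each local rule pattern. The mildly delicate bookkeeping is in $(F?)$ and $(F\bar{?})$, where the conclusion sits inside a doubly-nested justification formula and one must unfold the definition through two applications of clause 4 interleaved with clause 2; apart from this, the proof is a direct inspection of Tables \ref{table:J^T} and \ref{table:tableau rules JL}.
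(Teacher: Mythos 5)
Your proposal is correct and takes essentially the same route as the paper, which proves the theorem simply by asserting that inspection of the rules in Tables~\ref{table:J^T} and~\ref{table:tableau rules JL} shows every expanded formula is a weak $\JL_\CS$-subformula of the root; your induction on rule applications is just the explicit form of that inspection. One small caution: the weak-subformula relation is not itself transitive (clause 7 of Definition~\ref{def:subformula} only closes the plain subformula relation), but this does not hurt you, since your rule-by-rule checks in fact show that each conclusion is a plain $\JL_\CS$-subformula of the root or the negation of one (the unnegated part of any weak subformula of the root is a genuine subformula of the root), which is exactly what the invariant requires.
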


Note that the subformula property does not ensure decidability, because the number of $\JL_\CS$-subformulas of a formula is not necessarily finite. In fact, it is wrongly claimed in \cite[page 172]{Finger2010} that for a finite $\CS$, the set of all $\JL_\CS$-subformulas of a formula is always finite. For a counterexample, consider a formula $t:A$ and an empty $\CS$. The set of all $\JL_\emptyset$-subformulas of $t:A$ includes $t:A, t:t:A, t:t:t:A, \ldots$, which is obviously infinite.

\section{Conclusion}
We introduced two kinds of tableau proof systems for each justification logic $\JL$, i.e. $\JL$-tableaux of Section \ref{sec:Tableaux 1} and $\JL^\mathcal{T}$-tableaux of Section \ref{sec:JL^T tableaux}. We proved soundness and completeness theorems for both kinds of tableaux. While some $\JL$-tableau rules are not analytic, we showed a kind of subformula property for $\JL^\mathcal{T}$-tableaux.   \\

\noindent
{\bf Acknowledgments}\\

This research was in part supported by a grant from IPM. (No. 95030416)


\end{document}